\numberwithin{equation}{section}
\newtheorem{thm}{Theorem}[section]
\newtheorem{cor}[thm]{Corollary}
\newtheorem{lem}[thm]{Lemma}
\newtheorem{rem}[thm]{Remark}
\DeclareMathOperator{\Ann}{Ann} \DeclareMathOperator{\Tor}{Tor}
\DeclareMathOperator{\Ext}{Ext} \DeclareMathOperator{\Supp}{Supp}
\DeclareMathOperator{\V}{V} \DeclareMathOperator{\Hom}{Hom}
\DeclareMathOperator{\Ker}{Ker} \DeclareMathOperator{\Coker}{Coker}
\DeclareMathOperator{\Image}{Im} 
\DeclareMathOperator{\cd}{cd} 
 \DeclareMathOperator{\Max}{Max}
\DeclareMathOperator{\lc}{H} 
\DeclareMathOperator{\G}{\Gamma} 
\DeclareMathOperator{\h}{H}
\DeclareMathOperator{\K}{K}
\DeclareMathOperator{\ara}{ara}
\newcommand{\fa}{\mathfrak{a}}
\newcommand{\fp}{\mathfrak{p}}
\newcommand{\mS}{\mathcal S}
\newcommand{\mC}{\mathcal C}
\newcommand{\lo}{\longrightarrow}
\begin{document}

\title[A characterization result of cofinite local cohomology modules]
{A characterization result of cofinite local cohomology modules}

\bibliographystyle{amsplain}

   \author[M. Aghapournahr]{Moharram Aghapournahr}
\address{Department of Mathematics, Faculty of Science, Arak University,
Arak, 38156-8-8349, Iran.}
\email{m-aghapour@araku.ac.ir}

 \author[L. Melkersson]{Leif Melkersson}
\address{Department of Mathematics, University of Lund\\ 
	S-221 00 Lund\\ 
	Sweden}
\email{leif.melkersson@math.lu.se}


\keywords{Local cohomology, ${\rm FD_{< n}}$ modules, cofinite modules,  weakly Laskerian modules}

\subjclass[2010]{13D45, 13E05, 14B15.}


\begin{abstract}
Let $R$ be a commutative Noetherian ring, $\fa$ an ideal of $R$,
$M$ an arbitrary $R$-module and $N$ a finite $R$-module. We prove that \cite[Theorem 2.1]{Mel} and \cite[Proposition 3.3 (i)$\Leftrightarrow$(ii)]{B1} are true for any Serre subcategory of $R$-modules. We also prove a characterization theorem for $\lc_{\fa}^{i}(M)$ and $\lc_{\fa}^{i}(N,M)$ to be $\fa$-cofinite for all $i$, whenever one of the following cases holds:
(a) $\ara (\fa)\leq 1$, (b) $\dim R/\fa \leq 1$ or (c) $\dim R\leq 2$. In the end we study Artinianness and Artinian $\fa$-cofiniteness of local cohomology modules.
\end{abstract}

\maketitle


\section{Introduction}


Throughout  this paper $R$ is a commutative Noetherian ring with non-zero
identity and $\fa$ an ideal of $R$. For an $R$-module $M$,  the $i^{th}$
local cohomology module $M$ with respect to ideal $\fa$ is defined as
\begin{center}
	$\lc^{i}_{\fa}(M) \cong \underset{n}\varinjlim \Ext^{i}_{R}(R/{\fa}^{n},M).$
\end{center}

Hartshorne in \cite{Har} defined a module $M$ to be $\fa$--cofinite if $\Supp_R(M) \subseteq \V(\fa)$ and $\Ext^i_
R(R/\fa,M)$ is finite for all $i\geq 0$. He also asked the following question:\\

\noindent {\bf Hartshorne's question:} {\it Let $M$ be a finite $R$-module and $\fa$ be an ideal of $R$. When are
	$\lc^{i}_{\fa}(M)$ $\fa$--cofinite for
	all $i\geq 0$?}\\
The answer is negative in general, see \cite{Har} for a counterexample,  but it is true in the following cases:

\begin{thm}\label{1.1}
	Let $M$ be a finite $R$-module and suppose one of the following cases holds:
	\begin{itemize}
		\item[(a)] $\cd(\fa,R)\leq 1$ or $\dim R\leq 2$. See \cite{Mel}.
		\item[(b)] $\dim R/{\fa}\leq 1$. See \cite{BN}.
	\end{itemize}
	Then $\lc^{i}_{\fa}(M)$ is $\fa$--cofinite for
	all $i\geq 0$.
\end{thm}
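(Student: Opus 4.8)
\emph{Proof proposal.} Theorem~\ref{1.1} collects results of \cite{Mel} and \cite{BN}, and I would prove all three cases from one mechanism. First reduce to the case $\Gamma_{\fa}(M)=0$: as $M$ is finite, $\lc^{0}_{\fa}(M)=\Gamma_{\fa}(M)$ is a finite module supported in $\V(\fa)$, hence $\fa$-cofinite, and the exact sequence $0\to\Gamma_{\fa}(M)\to M\to M/\Gamma_{\fa}(M)\to 0$ gives $\lc^{i}_{\fa}(M)\cong\lc^{i}_{\fa}(M/\Gamma_{\fa}(M))$ for $i\ge 1$, while the two-out-of-three property of cofiniteness in short exact sequences lets one pass freely between $M$ and $M/\Gamma_{\fa}(M)$. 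The main tool is the Grothendieck spectral sequence coming from the identity $\Hom_{R}(R/\fa,\Gamma_{\fa}(-))=\Hom_{R}(R/\fa,-)$ (legitimate because $\Gamma_{\fa}$ sends injectives to injectives):
\[
E_{2}^{p,q}=\Ext^{p}_{R}(R/\fa,\lc^{q}_{\fa}(M))\ \Longrightarrow\ \Ext^{p+q}_{R}(R/\fa,M).
\]
Since $M$ and $\lc^{0}_{\fa}(M)$ are finite, every abutment module and every $\Ext^{p}_{R}(R/\fa,\lc^{0}_{\fa}(M))$ is finite; these are the only external inputs.

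If $\cd(\fa,R)\le 1$ the spectral sequence has only the rows $q=0,1$, so it degenerates at $E_{3}$; the single differential meeting row $1$ is $d_{2}\colon E_{2}^{p,1}\to E_{2}^{p+2,0}$, and there is no incoming differential, row $2$ being zero. Hence $\Ext^{p}_{R}(R/\fa,\lc^{1}_{\fa}(M))=E_{2}^{p,1}$ is an extension of a submodule of the finite module $\Ext^{p+2}_{R}(R/\fa,\lc^{0}_{\fa}(M))$ by the subquotient $E_{\infty}^{p,1}$ of the finite module $\Ext^{p+1}_{R}(R/\fa,M)$, so it is finite for all $p$, i.e. $\lc^{1}_{\fa}(M)$ is $\fa$-cofinite. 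In the other cases every $\lc^{q}_{\fa}(M)$ with $q\ge 1$ has $\dim\lc^{q}_{\fa}(M)\le 1$ — because $\Supp\lc^{q}_{\fa}(M)\subseteq\V(\fa)$ when $\dim R/\fa\le 1$, and because $\dim\lc^{q}_{\fa}(M)\le\dim M-q\le 1$ when $\dim R\le 2$ — so by Melkersson's criterion (a module $X$ with $\Supp X\subseteq\V(\fa)$ and $\dim X\le 1$ is $\fa$-cofinite once $\Hom_{R}(R/\fa,X)$ and $\Ext^{1}_{R}(R/\fa,X)$ are finite) it suffices to show $\Hom_{R}(R/\fa,\lc^{q}_{\fa}(M))$ and $\Ext^{1}_{R}(R/\fa,\lc^{q}_{\fa}(M))$ are finite, which I would do by induction on $q$. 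In the spectral sequence the differentials entering $E_{r}^{0,q}$ and $E_{r}^{1,q}$ originate in negative columns, hence vanish; those leaving these spots land in row $q-r+1<q$, so their targets are either zero or subquotients of $\Ext$-modules against $\lc^{q'}_{\fa}(M)$ with $q'<q$, finite by induction; and $E_{\infty}^{0,q}$, $E_{\infty}^{1,q}$ are subquotients of the finite modules $\Ext^{q}_{R}(R/\fa,M)$, $\Ext^{q+1}_{R}(R/\fa,M)$. Reading the pages back from $E_{\infty}$ to $E_{2}$ then forces $E_{2}^{0,q}$ and $E_{2}^{1,q}$ to be finite, and Melkersson's criterion yields the cofiniteness of $\lc^{q}_{\fa}(M)$. (When $\dim R\le 2$ one may alternatively reduce to the other two cases, since Hartshorne--Lichtenbaum vanishing shows $\dim R/\fa\ge 1$ already forces $\cd(\fa,R)\le 1$, with $\dim R/\fa=0$ the classical $\fm$-primary case.)

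I expect the genuinely delicate part to be $\cd(\fa,R)\le 1$: there $\lc^{1}_{\fa}(M)$ may have arbitrarily large dimension, so no low-dimensional cofiniteness criterion applies and one must pull finiteness of \emph{every} $\Ext^{p}_{R}(R/\fa,\lc^{1}_{\fa}(M))$ directly out of the two-row spectral sequence. The rest is bookkeeping: arranging the induction on cohomological degree so that each differential touching columns $0$ and $1$ either vanishes for column reasons or has a target already known finite, and running the page-by-page extraction in the right direction; beyond that one only needs the two-out-of-three property of cofiniteness, Grothendieck vanishing, and the inequality $\dim\lc^{q}_{\fa}(M)\le\dim M-q$.
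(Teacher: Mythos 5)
The paper offers no proof of Theorem \ref{1.1}: it is a summary of known results, with case (a) cited to \cite{Mel} (and \cite{MS1} implicitly for the generalized version) and case (b) to \cite{BN}. Your argument is therefore necessarily a different route, and in the main it is correct and pleasantly uniform: the Grothendieck spectral sequence $E_2^{p,q}=\Ext^p_R(R/\fa,\lc^q_\fa(M))\Rightarrow \Ext^{p+q}_R(R/\fa,M)$ is legitimate, the two-row degeneration does give finiteness of every $\Ext^p_R(R/\fa,\lc^1_\fa(M))$ when $\cd(\fa,R)\le 1$ (using only that $\G_\fa(M)$ and $M$ are finite), and in cases (b), (c) the bound $\dim\lc^q_\fa(M)\le\min\{\dim R/\fa,\ \dim M-q\}\le 1$ together with the column-$0,1$ extraction and the dimension-one criterion of \cite{Mel1}/\cite{BN} (which you must, and legitimately may, take as the external input) runs the induction on $q$ correctly: incoming differentials vanish for column reasons, outgoing ones land on $\Ext$-modules against $\lc^{q'}_\fa(M)$ with $q'<q$, finite by induction, and $E_\infty^{0,q},E_\infty^{1,q}$ are subquotients of finite abutments. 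What this buys over the original sources is a single mechanism for all three cases; what it costs is reliance on the dimension-$\le 1$ cofiniteness criterion as a black box, whereas \cite{BN} and \cite{Mel} establish that machinery themselves (by rather different, more hands-on filtration and change-of-ring arguments).

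One concrete error, fortunately confined to your parenthetical alternative for case (c): it is not true that $\dim R\le 2$ and $\dim R/\fa\ge 1$ force $\cd(\fa,R)\le 1$. Take $R=k[[x,y,u,v]]/\bigl((x,y)\cap(u,v)\bigr)$ and $\fa=(x,u,v)R$; then $\dim R=2$ and $R/\fa\cong k[[y]]$ has dimension $1$, but the minimal prime $\fq=(x,y)R$ satisfies $\dim R/(\fa+\fq)=0$, so Hartshorne--Lichtenbaum gives $\lc^2_\fa(R)\neq 0$, i.e. $\cd(\fa,R)=2$. So case (c) does not reduce to cases (a) and (b); keep your direct argument via $\dim\lc^q_\fa(M)\le\dim M-q\le 1$, which is sound, and delete the aside.
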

Note that $\cd(\fa,R)\leq 1$  (the cohomological dimension of $\fa$ in $R$) is the smallest integer $n$ such that the local
cohomology modules $\lc^{i}_{\fa}(S)$ are zero for all $R$--modules $S$ and for all $i> n$.

The generalized local
cohomology module
\begin{center}
	$\lc^{i}_{\fa}(M,N) \cong \underset{n}\varinjlim
	\Ext^{i}_{R}(N/{\fa}^{n}N,M).$
\end{center}
for all $R$--module $M$ and $N$ was introduced by Herzog in \cite{He}.
Clearly it is a generalization of ordinary local cohomology module. In
\cite{Ya} Yassemi asked whether the  Hartshorne's question hold for generalized local cohomology.
It is answered in the following cases:

\begin{thm}\label{1.2}
	Let $M, N$ be two finite $R$-modules and suppose one of the following cases holds:
	
	\begin{itemize}
		\item[(a)] $\cd(\fa,R)\leq 1$ or $\dim R\leq 2$. See \cite{MS1}.
		\item[(b)] $\dim R/{\fa}\leq 1$. See \cite{DH}.
	\end{itemize}
	Then $\lc^{i}_{\fa}(N,M)$ is $\fa$--cofinite for
	all $i\geq 0$.
\end{thm}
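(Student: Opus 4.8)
My plan is to reduce the statement for generalized local cohomology to the ordinary case (Theorem~\ref{1.1}) by means of a spectral sequence. First I would record that, since $R$ is Noetherian, $\G_\fa$ carries injective $R$-modules to injective $R$-modules, and that, since $N$ is finitely generated, $\G_\fa\bigl(\Hom_R(N,-)\bigr)=\Hom_R\bigl(N,\G_\fa(-)\bigr)$ as functors; hence the Grothendieck spectral sequence of the composite $\Hom_R(N,-)\circ\G_\fa$ takes the form
\[
E_2^{\,p,q}=\Ext^p_R\!\bigl(N,\lc^q_\fa(M)\bigr)\ \Longrightarrow\ \lc^{\,p+q}_\fa(N,M).
\]
By Theorem~\ref{1.1}, in each of the cases (a)--(c) the module $\lc^q_\fa(M)$ is $\fa$-cofinite for every $q$ (and it vanishes for $q>1$ when $\cd(\fa,R)\le1$, and for $q>2$ when $\dim R\le2$, so in those cases the spectral sequence is concentrated in a bounded strip).

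Next I would show that every $E_2^{\,p,q}$ is $\fa$-cofinite. Choosing a resolution of $N$ by finitely generated free $R$-modules and applying $\Hom_R(-,\lc^q_\fa(M))$, one sees that $E_2^{\,p,q}$ is a subquotient of a finite direct sum of copies of $\lc^q_\fa(M)$. The decisive point is that in each of the three situations the $\fa$-cofinite $R$-modules form a Serre subcategory of the category of all $R$-modules (for $\cd(\fa,R)\le1$ and $\dim R\le2$, see \cite{Mel}; for $\dim R/\fa\le1$, see \cite{BN}): being closed under finite direct sums and under subquotients, such a subcategory contains every $E_2^{\,p,q}$. In the case $\dim R\le2$ one can also argue row by row without appealing to the full Serre statement: $\lc^0_\fa(M)=\G_\fa(M)$ is finitely generated with support in $\V(\fa)$, whence $\Ext^p_R(N,\G_\fa(M))$ is finitely generated with support in $\V(\fa)$ and therefore $\fa$-cofinite; on the other hand $\dim_R\lc^1_\fa(M)\le1$ and $\dim_R\lc^2_\fa(M)\le0$, and the $\fa$-cofinite modules of dimension at most one do form a Serre subcategory, which disposes of the rows $q=1$ and $q=2$.

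Then I would run the spectral sequence. It is first-quadrant, hence convergent, and $\lc^n_\fa(N,M)$ possesses a finite filtration whose successive quotients are the terms $E_\infty^{\,p,q}$ with $p+q=n$; each $E_\infty^{\,p,q}$ is a subquotient of $E_2^{\,p,q}$, hence $\fa$-cofinite. Since the class of $\fa$-cofinite modules is closed under extensions --- which is automatic, from the long exact sequence obtained by applying $\Ext_R(R/\fa,-)$ --- finitely many applications of this closure show that $\lc^n_\fa(N,M)$ is $\fa$-cofinite for every $n$.

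I expect the main obstacle to be the middle step: the assertion that the $\fa$-cofinite modules form a Serre subcategory in these three cases is the genuinely nontrivial ingredient, since in general submodules and quotients of $\fa$-cofinite modules fail to be $\fa$-cofinite. Once that is granted, everything else --- the construction and convergence of the spectral sequence, together with the elementary closure properties of cofinite modules --- is routine, and the passage from $M$ to the pair $(N,M)$ is purely formal.
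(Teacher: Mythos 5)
Your spectral-sequence framework itself is sound: $\G_\fa$ preserves injectives, $\Hom_R(N,\G_\fa(-))$ computes $\lc^{\bullet}_\fa(N,-)$ for finite $N$, so $E_2^{p,q}=\Ext^p_R(N,\lc^q_\fa(M))\Rightarrow \lc^{p+q}_\fa(N,M)$ is available, and this is indeed close to how the cited sources \cite{MS1}, \cite{DH} argue (the paper itself does not prove Theorem \ref{1.2}; it only cites these works, and in Section 3 it proves the more general Theorem \ref{cof4} by a different inductive argument using the torsion sequence and Lemma \ref{1}). The genuine gap is the step you yourself single out as decisive: it is \emph{false} that the $\fa$-cofinite modules form a Serre subcategory under (a), (b) or (c), and \cite{Mel}, \cite{BN} do not assert this. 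Concretely, let $R=k[[x,y]]$ and $\fa=(x)$, so that simultaneously $\ara(\fa)=\cd(\fa,R)=1$, $\dim R/\fa=1$ and $\dim R=2$. The module $M=\lc^1_\fa(R)=R_x/R$ is $\fa$-cofinite and $\dim M=1$. Let $N_0\subseteq M$ be the submodule generated by the classes of $y^i/x^i$, $i\ge 1$. In $M/N_0$ the classes of $y^i/x^{i+1}$, $i\ge1$, are annihilated by $x$ and by $y$, and a coefficient comparison (multiply a relation $\sum_{i}c_iy^i/x^{i+1}\in N_0+R$ by a high power of $x$ and compare the coefficients of the monomials $y^ix^{K-i-1}$) shows they are $k$-linearly independent; hence $\Hom_R(R/\fa,M/N_0)$ contains an infinite-dimensional socle and is not finite, so the quotient $M/N_0$ of an $\fa$-cofinite module is not $\fa$-cofinite. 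This refutes closure under subquotients in all three cases at once, and since $\dim M=1$ it also refutes your fallback claim that "$\fa$-cofinite modules of dimension at most one form a Serre subcategory."

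What is true, and what repairs the proof, is weaker but sufficient: the class of $\fa$-cofinite modules is always closed under extensions, and under the hypotheses in question it is an \emph{Abelian} full subcategory, i.e.\ kernels and cokernels of homomorphisms \emph{between} $\fa$-cofinite modules are $\fa$-cofinite (\cite{Mel1} for $\dim R/\fa\le1$, \cite{B1} for $\cd(\fa,R)\le1$, and the corresponding result for $\dim R\le2$); moreover $\Ext^p_R(N,T)$ is $\fa$-cofinite whenever $T$ is $\fa$-cofinite and $N$ is finite --- these are exactly the results \cite[Theorem 4.1]{BA}, \cite[Corollary 2.7]{Mel1}, \cite[Theorem 7.4]{Mel} that the paper invokes in its proof of Theorem \ref{cof4}. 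Your $E_2$-terms are not arbitrary subquotients of $T^{n_p}$: they are the cohomology of $\Hom_R(F_\bullet,\lc^q_\fa(M))$, a complex whose terms and differentials live in the category of cofinite modules, and each later page $E_{r+1}$ is the cohomology of $E_r$ at differentials between cofinite modules; so the Abelian-plus-extensions property carries the argument through to $E_\infty$ and through the finite filtration of $\lc^n_\fa(N,M)$. With that substitution your proof is correct; as written, however, it rests on a Serre-subcategory statement that fails even in the simplest relevant example, and which would in particular claim far more than is known for the case $\cd(\fa,R)\le1$ that the paper explicitly leaves open in its concluding question.
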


Recall that an arbitrary $R$-module $X$ is said to be  weakly Laskerian if  the set of associated prime ideals of any quotient module of $X$ is finite (see \cite{DiM}). Recently, in \cite{hung}, the author introduced the class of extension modules of finitely generated
modules by the class of all modules  of finite support and named it ${\rm FSF}$
modules. Bahmanpour in  \cite[Theorem 3.3]{B} proved that over a Noetherian ring $R$ an $R$-module
$X$ is weakly Laskerian if and
only if it is ${\rm FSF}$. 


 Recall too that an arbitrary $R$-module $X$ is said to be an $\operatorname{FD}_{<  n}$ $R$-module if there exists a finite submodule  $X'$ of $X$ such that $\dim_R(X/X')< n$ (see \cite{AB}). Recall also that a class of $R$-modules is said to be a {\it Serre subcategory of the category of $R$-modules}, when it is closed under taking submodules, quotients and extensions.\\

 The second author of present paper in \cite[Theorem 2.1]{Mel}
proved that for any ideal $\fa$  of $R$ and any $R$-module $M$, the $R$-module
$\Ext^{i}_{R}(R/\fa,M)$ is finite for all $i$ if and only if the $R$-module $\Tor^R_{i}(R/\fa,M)$ is finite for all $i$ if and only if  Koszul cohomology $\h^i(x_1\dots,x_s ; M)$ is finite for all $i$ where $\mathfrak a=(x_1,\dots,x_s)$. Our main result in Section 2 is in this direction. More precisely, we prove the following result that show \cite[Theorem 2.1]{Mel} and \cite[Proposition 3.3 (i)$\Leftrightarrow$(ii)]{B1} are true for any Serre subcategory of modules:

\begin{thm}\label{T:E}
	Let $\mathcal{S}$ be a Serresubcategory of the category of modules over the 
	Noetherian ring $R$ and let $\mathfrak a=(x_1,\dots,x_s)$ be an ideal of $R$.
	Given an $R$-module $M$ the following conditions are equivalent:
	\begin{itemize}
		\item[(i)] $\Ext_R^{i}(R/{\mathfrak a}, M)$ belongs to $\mathcal S$ 
		for all $i$.
		\item[(ii)] $\Tor_i^R(R/{\fa}, M)$ belongs to $\mS$ for all $i$.
		\item[(iii)] $\h^i(x_1\dots,x_s ; M)$ belongs to $\mS$ for all $i$.
		\item [(iv)]  $\h_i(x_1,\dots,x_s; M)$ belongs to $\mS$ for all $i$.
	\end{itemize} 
\end{thm}

In section 3, we study relationship between cofiniteness of local cohomology and generalized local cohomology modules and we prove that the cofiniteness of these two  kinds of local cohomolgy under the following assumptions depends to each other. More generally we prove a characterization result (see Corollary \ref{min}), in the cases  (a) $\ara (\fa)\leq 1$, (b) $\dim R/\fa \leq 1$ or (c) $\dim R\leq 2$ for $\lc_{\fa}^{i}(M)$ and $\lc_{\fa}^{i}(N,M)$ to be $\fa$-cofinite for all $i$.\\

One of the important question in the study of local cohomology modules is determine when the local cohomology is Artinian (see \cite{Hu})? In this direction, in Section 4,  we study the Artinianness and Artinian $\fa$-cofiniteness of local cohomology modules from upper bound when $\dim R/{\fa} =1$.  More precisely, we show that:


\begin{thm}
	Let $\fa$ be an ideals of a Noetherian ring $R$, with $\dim R/{\fa}=1$. For an $R$-module $M$   and  integer  an  $r$ the following conditions are equivalent; 
	
	\begin{itemize}
		\item[(i)]$\Supp_R(\lc^{i}_\fa(M))\subseteq \Max(R)$ for each $i>r$.
		\item[(ii)] $\lc^{i}_\fa(M)$ is an Artinian $R$-module for each $i>r$.
		\item[(iii)] $\lc^{i}_\fa(M)$ is an Artinian $\fa$-cofinte $R$-module for each $i>r$.
	\end{itemize} 
\end{thm}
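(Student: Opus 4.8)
The plan is to prove the cycle of implications (iii) $\Rightarrow$ (ii) $\Rightarrow$ (i) $\Rightarrow$ (iii), since (ii) $\Rightarrow$ (iii) is trivial (an Artinian $\fa$-cofinite module is in particular Artinian) and (iii) $\Rightarrow$ (ii) is equally trivial. So the real content is the equivalence of (i) and (ii). The geometric picture behind the hypothesis $\dim R/\fa = 1$ is that $\V(\fa)$ consists of the maximal ideals together with a handful of one-dimensional primes, so a module supported in $\V(\fa)$ is ``almost'' supported on $\Max(R)$; an $\fa$-torsion module supported only on maximal ideals should be Artinian under a cofiniteness-type finiteness input, which is exactly the kind of statement available from Theorem~\ref{1.1}(b).

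First I would handle (i) $\Rightarrow$ (ii). Assume $\Supp_R(\lc^{i}_\fa(M)) \subseteq \Max(R)$ for all $i > r$. I would like to reduce to the finite-module case via the canonical surjection-and-limit presentation $\lc^i_\fa(M) \cong \varinjlim_n \Ext^i_R(R/\fa^n, M)$, but directly that does not force finiteness. Instead the cleaner route is: an $\fa$-torsion $R$-module $T$ with $\Supp_R(T) \subseteq \Max(R)$ which is also $\fa$-cofinite is Artinian — indeed such a $T$ has $\Hom_R(R/\fa, T)$ finite and supported on finitely many maximal ideals, hence of finite length, and then a standard argument (e.g. \cite{Mel} or Melkersson's criterion for Artinianness of $\fa$-torsion modules) shows $T$ itself is Artinian. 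So the crux is upgrading the support condition to $\fa$-cofiniteness of each $\lc^i_\fa(M)$ for $i>r$. Here I would invoke Theorem~\ref{1.1}(b): when $\dim R/\fa \le 1$, the local cohomology modules of a \emph{finite} module are $\fa$-cofinite. To bring an arbitrary $M$ into this framework I would use the $\operatorname{FD}_{<1}$ / weakly Laskerian technology: the hypothesis $\Supp(\lc^i_\fa(M)) \subseteq \Max(R)$ for $i>r$ says these modules are $\operatorname{FD}_{<1}$ (even $\operatorname{FD}_{<0}$ up to the finite part in a suitable sense), and the results cited in the introduction (from \cite{AB}, \cite{Mel}) give that such modules are $\fa$-cofinite precisely under $\dim R/\fa \le 1$. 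One runs an induction on $i$ starting from $i = r+1$, peeling off the already-handled lower cohomology via the standard long exact sequences obtained by breaking $M$ into its $\fa$-torsion part and an $\fa$-torsion-free part, or by a minimal-injective-resolution truncation argument as in \cite{Mel}.

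For (ii) $\Rightarrow$ (i): if $\lc^i_\fa(M)$ is Artinian for $i > r$, then its support is a finite set of primes in $\V(\fa)$, and an Artinian module has support contained in $\Max(R)$ (the only primes in $\Att$ of an Artinian module over a Noetherian local ring are maximal, and globally $\Supp$ of an Artinian module is a finite set of maximal ideals); combined with $\Supp \subseteq \V(\fa)$ this gives $\Supp_R(\lc^i_\fa(M)) \subseteq \Max(R)$ directly. This direction is essentially immediate.

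The main obstacle I anticipate is the reduction from an arbitrary module $M$ to the situation covered by Theorem~\ref{1.1}(b), i.e.\ genuinely establishing the $\fa$-cofiniteness of $\lc^i_\fa(M)$ for $i>r$ from only the support hypothesis. The support condition alone does not give finiteness of $\Ext^j_R(R/\fa, \lc^i_\fa(M))$ for free; one needs the Serre-subcategory machinery of Section~2 (Theorem~\ref{T:E}) together with the one-dimensional cofiniteness results, and the bookkeeping in the long exact sequences — making sure the ``error terms'' below degree $r$ do not pollute the cofiniteness in degrees $> r$ — is where the care is required. A secondary technical point is the passage $\operatorname{FD}_{<1} \Rightarrow \fa\text{-cofinite}$ when $\dim R/\fa \le 1$, which should be quotable from the cited literature but must be invoked with the hypotheses checked.
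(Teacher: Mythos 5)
There is a genuine gap, and it sits exactly where you predicted it would. First, a structural slip: you have the trivial implications backwards. (iii)$\Rightarrow$(ii) is the trivial one; (ii)$\Rightarrow$(iii) is \emph{not} trivial, since an Artinian $\fa$-torsion module need not be $\fa$-cofinite when $\dim R/\fa=1$ (over a local ring $(R,\fm)$ take $T=E_R(R/\fm)$: then $\Hom_R(R/\fa,T)\cong E_{R/\fa}(R/\fm)$ is Artinian but not finitely generated). So you cannot reduce ``the real content'' to (i)$\Leftrightarrow$(ii); the cofiniteness in (iii) has to be proved, not inherited from Artinianness. Your sketch of (i)$\Rightarrow$(ii) does in fact aim to produce cofiniteness first, so the skeleton is repairable, but the decisive step --- upgrading the support hypothesis to $\fa$-cofiniteness of $\lc^i_\fa(M)$ for $i>r$ --- is precisely what you leave open. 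The $\operatorname{FD}_{<n}$/weakly Laskerian results you propose to quote (Theorem 3.1 and Corollary 3.2 of the paper, and the results of \cite{AB}) all require the finiteness of $\Ext^i_R(R/\fa,M)$ (or finiteness of $M$) as an \emph{input}; the condition $\Supp_R(\lc^i_\fa(M))\subseteq\Max(R)$ gives no such input, and for a genuinely arbitrary $M$ the implication you want is simply false: with $M=E_R(R/\fm)$ and $r=-1$, conditions (i) and (ii) hold while (iii) fails. So no amount of bookkeeping in long exact sequences will close your argument as stated; the statement is only accessible when $M$ is finite, which is what the paper's proof tacitly uses.

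The paper's proof is quite different from your proposed induction on $i$. It first records that for a \emph{finite} module $N$ with $\Supp_R(\lc^i_\fa(N))\subseteq\Max(R)$ for all $i$, the modules $\lc^i_\fa(N)$ are Artinian and $\fa$-cofinite (here the one-dimensional cofiniteness theorem of Bahmanpour--Naghipour and Melkersson's Artinianness criterion do the work, much as in your ``crux'' paragraph). It then sets $N=M/\G_\fa(M)$, disposes of $r=0$ by this remark, and for $r>0$ inducts on $r$: choose $x\in\fa$ regular on $N$, use Lemma \ref{sez3.11} to get $\dim (N/xN)_\fp\le r-1$ at every prime minimal over $\fa$, apply the induction hypothesis to $N/xN$, and then from the long exact sequence of $0\to N\xrightarrow{\,x\,}N\to N/xN\to 0$ conclude that $(0:_{\lc^i_\fa(N)}x)$ is Artinian $\fa$-cofinite for $i>r$, whence so is $\lc^i_\fa(N)\cong\lc^i_\fa(M)$ by Melkersson's criteria. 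Your proposal contains neither the reduction to the finite-module setting (it cannot, since you treat $M$ as arbitrary) nor any substitute for this regular-element induction, so the key implication (i)$\Rightarrow$(iii) remains unproved.
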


Throughout this paper, $R$ will always be a commutative Noetherian ring with
non-zero identity and $\fa$ will be an ideal of $R$.  We denote $\{\frak p \in {\rm
Spec}\,R:\, \frak p\supseteq \fa \}$ by $\V(\fa)$. 
For any unexplained notation and terminology we refer the reader to \cite{BSh}, \cite{BH} and \cite{Mat}.

\maketitle
\section{Equivalence of $\Ext$ and $\Tor$ in Serre subcategories}  

We begin this section with a useful remark that we need in the proof of main result of this section. 

\begin{rem}\label{r:colpow}
	Observe that if $\mS$ is a Serre subcategory of the category of $R$-modules
	then for any $R$-module $M$, such that the submodule  $0\underset{M}{:}\fa$ 
	belongs to $\mS$, then for each $l$ also 
	$0\underset{M}{:}\fa^l$  belongs to $\mS$ and if the quotient module
	$M/{{\fa}M}$ belongs to $\mS$, then $M/{{\fa}^lM}$ belong to $\mS$ 
	for all $l$.
	This follows by induction on $l$.
	Let $x_1,\dots,x_s$ be generators of $\fa$. 
	For the first assertion we use the map 
	${0\underset{M}{:}\fa^{l+1}}
	\to{(0\underset{M}{:}\fa^l)_1^s}$, defined by 
	$u\mapsto(x_iu)_1^s$. This map has the kernel $0\underset{M}{:}{\fa}$.
	For the second assertion use the map 
	$(M/{\fa^{l}M})_1^s\to M/{\fa^{l+1}M}$ defined by 
	$({\overline u}_i)_1^s\mapsto{\sum_1^s \overline{x_iu_i}}$, whose cokernel is
	isomorphic to $M/{\fa M}$.

\end{rem}

\begin{thm}\label{T:E}
	Let $\mathcal{S}$ be a Serresubcategory of the category of modules over the 
	Noetherian ring $R$ and let $\mathfrak a=(x_1,\dots,x_s)$ be an ideal of $R$.
	Given an $R$-module $M$ the following conditions are equivalent:
	\begin{itemize}
		\item[(i)] $\Ext_R^{i}(R/{\mathfrak a}, M)$ belongs to $\mathcal S$ 
		for all $i$.
		\item[(ii)] $\Tor_i^R(R/{\fa}, M)$ belongs to $\mS$ for all $i$.
		\item[(iii)] $\h^i(x_1\dots,x_s ; M)$ belongs to $\mS$ for all $i$.
		\item [(iv)]  $\h_i(x_1,\dots,x_s; M)$ belongs to $\mS$ for all $i$.
	\end{itemize} 
\end{thm}

The equivalence between (iii) and (iv) follows from the symmetry 
$\h^i(x_1,\dots,x_s;M)\cong\h_{s-i}(x_1,\dots,x_s;M)$ for 
$0\le i\le s$. The proof of the other equivalences
is provided by a series of lemmas.

\begin{lem}\label{L:I}
	Let $\fa$ be an ideal of $R$ and let $\mC$ be a class of $R$-modules 
	with the follwing three properties:
	\begin{itemize}
		\item[(1)] $\mS\subset\mC$.
		\item[(2)] $0\underset{M}{:}\fa\in\mS$ for each $M\in\mC$.
		\item[(3)] If $0\to{M'}\to M\to M^{\prime\prime}\to 0$ is a 
		short exact sequence with $M^{\prime}$ and $M$ in $\mC$, then also 
		$M^{\prime\prime}$ is in $\mC$.
	\end{itemize}
	Let $X^{\bullet}: 0\to X^0\to X^1\to X^2\to\dots$ be a cochain complex 
	such that for every $i$, $X^i\in\mC$ and 
	$\fa\subset\sqrt{\Ann\h^i(X^{\bullet})}$. 
	Then  $\h^i(X^{\bullet})\in\mS$ for all $i$.
\end{lem}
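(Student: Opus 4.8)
The plan is to dissect $X^{\bullet}$ into its cycle, boundary and cohomology modules and run an induction on $i$. Write $Z^i=\Ker(X^i\to X^{i+1})$ and $B^i=\Image(X^{i-1}\to X^i)$, so that $B^0=0$ and $\h^i(X^{\bullet})=Z^i/B^i$. Since $R$ is Noetherian and $\fa$ is finitely generated, I would first record the elementary fact that the hypothesis $\fa\subseteq\sqrt{\Ann\h^i(X^{\bullet})}$ is equivalent to the existence of an integer $n_i\ge 1$ with $\fa^{n_i}\h^i(X^{\bullet})=0$ (write $\fa=(x_1,\dots,x_s)$, choose powers $x_j^{k_j}$ annihilating $\h^i(X^{\bullet})$, and note that a sufficiently high power of $\fa$ is then generated by monomials in which some $x_j$ occurs to the power $\ge k_j$). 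I would then prove by induction on $i\ge 0$ the combined statement: $B^i\in\mC$ and $\h^i(X^{\bullet})\in\mS$.

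For $i=0$: $B^0=0\in\mS\subseteq\mC$ by (1), and $\h^0(X^{\bullet})=Z^0$ is a submodule of $0\underset{X^0}{:}\fa^{n_0}$; the latter lies in $\mS$ because $0\underset{X^0}{:}\fa\in\mS$ by (2) (as $X^0\in\mC$) and then Remark \ref{r:colpow} upgrades $\fa$ to $\fa^{n_0}$. Hence $\h^0(X^{\bullet})\in\mS$, since $\mS$ is closed under submodules.

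For the inductive step, assume $B^i\in\mC$ and $\h^i(X^{\bullet})\in\mS$. First, the exact sequence $0\to B^i\to X^i\to X^i/B^i\to 0$ together with $B^i,X^i\in\mC$ gives $X^i/B^i\in\mC$ by (3). The map $X^i\to X^{i+1}$ annihilates $B^i$, hence factors as $X^i/B^i\to X^{i+1}$ with kernel $Z^i/B^i=\h^i(X^{\bullet})$ and image $B^{i+1}$; thus $0\to\h^i(X^{\bullet})\to X^i/B^i\to B^{i+1}\to 0$ is exact, and since $\h^i(X^{\bullet})\in\mS\subseteq\mC$ and $X^i/B^i\in\mC$, property (3) yields $B^{i+1}\in\mC$. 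Now $0\to B^{i+1}\to X^{i+1}\to X^{i+1}/B^{i+1}\to 0$ and (3) give $X^{i+1}/B^{i+1}\in\mC$, whence $0\underset{X^{i+1}/B^{i+1}}{:}\fa^{n_{i+1}}\in\mS$ by (2) and Remark \ref{r:colpow}. Finally, $\fa^{n_{i+1}}\h^{i+1}(X^{\bullet})=0$ says exactly that $\h^{i+1}(X^{\bullet})=Z^{i+1}/B^{i+1}$ is a submodule of $0\underset{X^{i+1}/B^{i+1}}{:}\fa^{n_{i+1}}$, so $\h^{i+1}(X^{\bullet})\in\mS$. This closes the induction and proves the lemma.

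The step I expect to be the crux is producing $B^{i+1}\in\mC$ via the exact sequence $0\to\h^i(X^{\bullet})\to X^i/B^i\to B^{i+1}\to 0$: this is what allows membership in $\mC$ to propagate one step up the complex using only the one-directional closure property (3), and it deliberately sidesteps the cycle modules $Z^i$, over which (1)--(3) give us no control whatsoever. Everything else is a routine diagram chase, the only additional ingredient being Remark \ref{r:colpow}, used to pass from $\fa$ to its powers.
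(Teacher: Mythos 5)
Your proof is correct and is essentially the paper's own argument: the paper packages the same induction into Lemmas \ref{L:I_0} and \ref{L:I_n}, carrying $\Coker d^{\,n}=X^{n+1}/B^{n+1}\in\mC$ instead of $B^{i}\in\mC$, but the chain of exact sequences, the embedding of $\h^i$ into $0\underset{X^i/B^i}{:}\fa^{l}$, and the use of properties (2), (3) and Remark \ref{r:colpow} are identical. The only cosmetic difference is that you keep a separate exponent $n_i$ for each cohomology module while the paper takes a common power of $\fa$, which changes nothing.
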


In order  to prove  Lemma \ref{L:I} we prove two lemmas. 
In those $\mC$ satisfies 
the conditions in Lemma \ref{L:I}.
These lemmas are also of some independent interest.

\begin{lem}\label{L:I_0}
	Let $f: M\to N$ be $R$-linear, where both $M$ and $N$ are in $\mC$. 
	Assume that $\fa^l\Ker f=0$ for some $l$.
	Then $\Ker f$ belongs to $\mS$ {\rm(}hence also to $\mC${\rm)} and 
	$\Coker f$ belongs to $\mC$.
\end{lem}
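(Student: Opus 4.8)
The plan is to handle $\Ker f$ and $\Coker f$ by separate arguments: the kernel will land in $\mS$ directly because it is an $\fa^l$-torsion submodule of a module in $\mC$, and the cokernel will land in $\mC$ after chaining two short exact sequences extracted from $f$ through property (3).

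First I would deal with $K:=\Ker f$. Since $K$ is a submodule of $M$ and $M\in\mC$, property (2) gives $0\underset{M}{:}\fa\in\mS$. By Remark \ref{r:colpow} this forces $0\underset{M}{:}\fa^l\in\mS$ for every $l$. The hypothesis $\fa^l\Ker f=0$ says precisely that $K\subseteq 0\underset{M}{:}\fa^l$, and since a Serre subcategory is closed under submodules, $K\in\mS$. Because $\mS\subset\mC$ by (1), we also get $K\in\mC$.

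Next, for $\Coker f$ I would apply property (3) twice. The short exact sequence $0\to K\to M\to\Image f\to 0$ has left term $K\in\mC$ and middle term $M\in\mC$, so (3) yields $\Image f\in\mC$. Then the short exact sequence $0\to\Image f\to N\to\Coker f\to 0$ has left term $\Image f\in\mC$ and middle term $N\in\mC$, so (3) again yields $\Coker f\in\mC$, which finishes the proof.

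There is no genuine obstacle here; the argument is a short diagram chase. The only points requiring a little care are that property (2) is stated only for the ideal $\fa$ itself and not for its powers, so one must route through Remark \ref{r:colpow} to upgrade $0\underset{M}{:}\fa\in\mS$ to $0\underset{M}{:}\fa^l\in\mS$ before placing the $\fa^l$-torsion module $K$ in $\mS$; and one must invoke (3) in the correct order, first to obtain $\Image f\in\mC$ and only afterwards on the sequence computing $\Coker f$, since (3) is not symmetric in the three terms of a short exact sequence.
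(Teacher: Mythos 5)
Your argument is correct and is essentially identical to the paper's proof: the kernel is placed in $\mS$ via $\Ker f\subseteq 0\underset{M}{:}\fa^l$, Remark \ref{r:colpow} and property (2) (plus closure of $\mS$ under submodules), and the cokernel is handled by applying property (3) first to $0\to\Ker f\to M\to\Image f\to 0$ and then to $0\to\Image f\to N\to\Coker f\to 0$. No gaps.
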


\begin{proof}
	Since $\Ker f\subset{0\underset{M}{:}\fa^l}$, which belongs to $\mS$ 
	by remark \ref{r:colpow} and the second condition  in 
	Lemma \ref{L:I}. Using the third condition in Lemma \ref{L:I} applied to the 
	exact sequences 
	$0\to\Ker f\to M\to\Image f\to 0$ and 
	$0\to\Image f\to N\to\Coker f\to 0$ 
	we first get that $\Image f\in\mC$ (Note that we assumed that 
	$\mS\subset\mC$) and then that $\Coker f\in\mC$.
\end{proof}

\begin{lem}\label{L:I_n}
	Let $ 0\to X^0\overset{d^0}{\to} X^1\to\dots\to X^{n-1}
	\overset{d^{n-1}}{\to} X^n\overset{d^n}{\to} X^{n+1}$
	be  a cocomplex, where each $X^i$ belongs to $\mC$.
	Suppose that there is some $l$, such that $\fa^l\h^i$ are in $\mS$
	for $i=0,\dots,n$. 
	Then the cohomology modules $\h^i$ are in $\mS$ for $i=0,\dots,n$ and 
	$\Coker d^n\in\mC$.
\end{lem}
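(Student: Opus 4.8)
The plan is to proceed by induction on $n$. For the base case $n=0$ we have the short complex $0\to X^0\overset{d^0}{\to}X^1$, so $\h^0=\Ker d^0$. Since $\fa^l\h^0\in\mS$, we can view $\h^0$ as a module over $R/\fa^l$; more to the point, $\h^0\subseteq 0\underset{X^0}{:}\fa^l$ is contained in a module that, by Remark \ref{r:colpow} together with condition (2) of Lemma \ref{L:I}, belongs to $\mS$. Hence $\h^0\in\mS$. For $\Coker d^0\in\mC$ we apply Lemma \ref{L:I_0} to $d^0\colon X^0\to X^1$: its kernel is $\h^0$, which is annihilated by $\fa^l$, so Lemma \ref{L:I_0} gives $\Coker d^0\in\mC$.

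For the inductive step, suppose the statement holds for cocomplexes of length $n-1$ and consider the given cocomplex of length $n+1$. Applying the induction hypothesis to the truncation $0\to X^0\to\cdots\to X^{n-1}\to X^n$, we obtain that $\h^i\in\mS$ for $i=0,\dots,n-1$ and that $C:=\Coker d^{n-1}\in\mC$. Now $d^n$ factors through $C$: the induced map $\bar d^n\colon C\to X^{n+1}$ has kernel $\Ker d^n/\Image d^{n-1}=\h^n$. Since $\fa^l\h^n\in\mS$, Remark \ref{r:colpow} gives $\fa^{l'}\h^n=0$ for a suitable $l'$ (indeed, $\h^n$ embeds in $0\underset{C}{:}\fa^{l'}$ once we know $\fa^l\h^n$ is annihilated by some power of $\fa$, using that $\fa^l\h^n\in\mS$ and condition (2)); more carefully, one first applies Lemma \ref{L:I_0} to the map $\fa^l$-multiplication argument, or simply notes $\h^n\subseteq 0\underset{C}{:}\fa^{l}$ after replacing $\h^n$ by $\fa^l\h^n\in\mS$ in a short exact sequence $0\to \fa^l\h^n\to\h^n\to\h^n/\fa^l\h^n\to 0$ whose outer terms lie in $\mS$ by Remark \ref{r:colpow} and condition (2). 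Either way $\h^n\in\mS$. Finally, applying Lemma \ref{L:I_0} to $\bar d^n\colon C\to X^{n+1}$, whose kernel $\h^n$ is annihilated by a power of $\fa$, yields $\Coker\bar d^n\in\mC$; but $\Coker\bar d^n=\Coker d^n$, completing the induction.

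The technical friction — not really an obstacle, but the point requiring care — is the passage from "$\fa^l\h^i\in\mS$" to "$\h^i\in\mS$" and to "$\h^i$ is killed by a power of $\fa$," since Lemma \ref{L:I_0} is stated for kernels that are \emph{annihilated} by a power of $\fa$, whereas here we only know a multiple lies in $\mS$. This is resolved by the short exact sequence $0\to\fa^l\h^i\to\h^i\to\h^i/\fa^l\h^i\to 0$: the submodule $\fa^l\h^i$ is in $\mS$ by hypothesis, and the quotient $\h^i/\fa^l\h^i$ is a homomorphic image of a module annihilated by $\fa^l$ sitting inside the relevant $X^j\in\mC$, so it lies in $0\underset{X^j/\cdots}{:}\fa^l$, hence in $\mS$ by Remark \ref{r:colpow} and condition (2); as $\mS$ is a Serre subcategory, $\h^i\in\mS$. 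Once $\h^i\in\mS$, it is in particular a submodule of some module in $\mC$, and since $\mS$-modules occurring as the cohomology of these complexes are $\fa$-torsion (being annihilated, after the above reduction, by a power of $\fa$), the hypothesis of Lemma \ref{L:I_0} is met and the argument goes through uniformly.
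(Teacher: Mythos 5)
Your induction has exactly the skeleton of the paper's proof: the base case is Lemma \ref{L:I_0} applied to $d^0$, and in the inductive step you truncate the complex, get $\Coker d^{n-1}\in\mC$, and apply Lemma \ref{L:I_0} to the induced map $\Coker d^{n-1}\to X^{n+1}$, whose kernel is $\h^n$. Read with the hypothesis that the paper actually intends and uses --- namely that $\fa^l$ annihilates each $\h^i$ (this is how the homological twin, Lemma \ref{L:II_n}, is phrased, and it is exactly what is available in the application to Lemma \ref{L:I}, where $\fa\subset\sqrt{\Ann\h^i(X^{\bullet})}$) --- your main line is complete and coincides with the paper's argument.

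The trouble is the material you add to bridge the literal wording ``$\fa^l\h^i\in\mS$'' to the annihilation needed for Lemma \ref{L:I_0}; the wording is indeed ambiguous, but your two bridges do not work as written. First, the claim that $\fa^l\h^n\in\mS$ forces $\fa^{l'}\h^n=0$ for some $l'$ is false: membership in a Serre subcategory carries no annihilation information (take $\mS$ to be the class of finite modules), and Remark \ref{r:colpow} asserts nothing of that kind. Second, in the sequence $0\to\fa^l\h^i\to\h^i\to\h^i/\fa^l\h^i\to 0$ the problematic term is the quotient: $\h^i/\fa^l\h^i$ is a quotient of a submodule of $\Coker d^{i-1}$ which is killed by $\fa^l$, but such a module need not embed into $(0:_{\Coker d^{i-1}}\fa^l)$ (already $\mathbb{Z}/p\mathbb{Z}$ is a quotient of $\mathbb{Z}$ killed by $p$, while $(0:_{\mathbb{Z}}p)=0$), and conditions (1)--(3) of Lemma \ref{L:I} control only $(0:_{M}\fa)$ for $M\in\mC$, not $M/\fa M$; so its membership in $\mS$ is unjustified. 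If you really want to prove the lemma under the weak reading, the correct bridge runs the other way: $(0:_{\h^n}\fa^l)\subseteq(0:_{\Coker d^{n-1}}\fa^l)\in\mS$ by condition (2) and Remark \ref{r:colpow}, while $\h^n/(0:_{\h^n}\fa^l)$ embeds into a finite direct sum of copies of $\fa^l\h^n$ via $u\mapsto(y_ju)_j$ for generators $y_1,\dots,y_t$ of $\fa^l$ (the same device as in Remark \ref{r:colpow}), so $\h^n\in\mS$ by closure under extensions; one then notes that the proof of Lemma \ref{L:I_0} only uses that the kernel lies in $\mS$, not that it is annihilated by $\fa^l$, and your inductive step goes through. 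With either the annihilation reading or this repair, the proof is correct and is essentially the paper's.
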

\begin{proof}
	We use induction on $n$.
	The case $n=0$ follows from Lemma \ref{L:I_0}.
	Let $n>0$ and assume that we have proved the result for $n-1$.
	The map $d^n$ induces a map
	$f : \Coker{d^{n-1}}\to X^{n+1}$. 
	Here both modules belong to $\mC$.
	The first one by the induction hypothesis and 
	the second one by the hypothesis in the lemma. 
	Since $\Ker f=\h^n$ we can apply Lemma \ref{L:I_0} to prove the case 
	$n$.
\end{proof}

Lemma  \ref{L:I} now follows from  Lemma \ref{L:I_n}.

\begin{lem}\label{L:II}
	Let $\mC$ be a class of $R$-modules satisfying: 
	\begin{itemize}
		\item[(1)] $\mS\subset\mC$.
		\item[(2)] $M/{\fa M}\in\mS$ for all $M\in\mC$.
		\item[(3)] If $0\to M^{\prime}\to M\to M^{\prime\prime}\to 0$ 
		is a short exact with $M$ and $M^{\prime\prime}$ in $\mC$, 
		then also $M^{\prime}\in\mC$.           
	\end{itemize}
	Let 
	$X_{\bullet}:\dots\to X_2\overset{d_1}{\to} X_1\overset{d_0}{\to} X_0\to 0$ 
	be a chain complex 
	such that for all $i$, $X_i\in\mC$ and 
	$\fa\subset\Ann\sqrt{\h_i(X_{\bullet})}$.
	Then the homology modules 
	$\h_i(X_{\bullet})$ belong to $\mS$  for all $i$.
\end{lem}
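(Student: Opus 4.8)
The plan is to dualize the proof of Lemma~\ref{L:I} line by line, replacing the submodule $0\underset{M}{:}\fa$ by the quotient $M/{\fa M}$, kernels by cokernels, and the exactness hypothesis in (3) accordingly. First I would prove the analogue of Lemma~\ref{L:I_0}: if $f\colon M\to N$ is $R$-linear with $M,N\in\mC$ and $\fa^l\Coker f=0$ for some $l$, then $\Coker f\in\mS$ (hence in $\mC$) and $\Ker f\in\mC$. Indeed, $\Image f\supseteq\fa^l N$, so $\Coker f$ is a quotient of $N/{\fa^l N}$; the latter belongs to $\mS$ by condition (2) of Lemma~\ref{L:II} together with Remark~\ref{r:colpow} (in the form $N/{\fa N}\in\mS\Rightarrow N/{\fa^l N}\in\mS$), and since $\mS$ is closed under quotients we get $\Coker f\in\mS\subset\mC$. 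Then condition (3) applied to $0\to\Image f\to N\to\Coker f\to 0$ yields $\Image f\in\mC$, and a second application to $0\to\Ker f\to M\to\Image f\to 0$ yields $\Ker f\in\mC$.

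Next I would prove the finite analogue of Lemma~\ref{L:I_n}: for a complex $X_{n+1}\overset{d_n}{\to}X_n\to\dots\to X_1\overset{d_0}{\to}X_0\to 0$ with every $X_i\in\mC$ and some $l$ with $\fa^l\h_i=0$ for $i=0,\dots,n$, the homology modules $\h_0,\dots,\h_n$ lie in $\mS$ and $\Ker d_n\in\mC$. This goes by induction on $n$. The base case $n=0$ is the previous lemma applied to $f=d_0\colon X_1\to X_0$, since $\Coker d_0=\h_0$. For the inductive step one applies the induction hypothesis to the truncated complex $X_n\overset{d_{n-1}}{\to}X_{n-1}\to\dots\to X_0\to 0$, obtaining $\h_0,\dots,\h_{n-1}\in\mS$ and $\Ker d_{n-1}\in\mC$; since $d_{n-1}d_n=0$, the map $d_n$ corestricts to $g\colon X_{n+1}\to\Ker d_{n-1}$, a map between modules of $\mC$ with $\Ker g=\Ker d_n$ and $\Coker g=\Ker d_{n-1}/\Image d_n=\h_n$, and the previous lemma (with $\fa^l\h_n=0$) completes the step.

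Finally, Lemma~\ref{L:II} follows at once. Fix $n$. Each $\h_i(X_\bullet)$ is annihilated by some power of $\fa$, so pick $l$ with $\fa^l$ annihilating $\h_0,\dots,\h_n$ simultaneously; applying the finite analogue above to $X_{n+1}\to X_n\to\dots\to X_0\to 0$ (whose homology in degrees $0,\dots,n$ agrees with that of $X_\bullet$, since $\h_i$ depends only on $X_{i-1},X_i,X_{i+1}$) gives $\h_i(X_\bullet)\in\mS$ for $i=0,\dots,n$, in particular $\h_n(X_\bullet)\in\mS$. As $n$ was arbitrary, all homology modules belong to $\mS$.

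I do not anticipate a real obstacle: the content is precisely that the argument for Lemma~\ref{L:I} is self-dual under the exchange above, so the ``hard part'' is only bookkeeping --- namely keeping the chain-complex indices straight (so that truncation at $X_{n+1}$ is harmless) and invoking Remark~\ref{r:colpow} in its quotient form rather than its submodule form.
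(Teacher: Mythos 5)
Your proof is correct and follows essentially the same route as the paper: the same cokernel lemma (the dual of Lemma~\ref{L:I_0}, proved in the paper as Lemma~\ref{L:II_0}) and the same induction on truncated complexes (the dual of Lemma~\ref{L:I_n}, i.e.\ Lemma~\ref{L:II_n}). The only differences are cosmetic: your identification $\Coker g=\h_n$ uses the correct index (the paper's proof writes $\h_{n-1}$ at that point), and you spell out the final passage from the truncations to the full complex slightly more explicitly.
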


In the next two lemmas $\mC$ is as in Lemma \ref{L:II}.

\begin{lem}\label{L:II_0}
	Let $f : M\to N$ be an $R$-linear map, where $M$ and $N$ both are in 
	$\mC$. Assume that ${\fa}^l\Coker f=0$ for some $l$. 
	Then $\Ker f$ and $\Coker f$ belong to $\mC$.
\end{lem}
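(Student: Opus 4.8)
The plan is to mirror the argument of Lemma \ref{L:I_0}, with the roles of kernel and cokernel interchanged, using the two canonical short exact sequences attached to $f$. First I would dispose of $\Coker f$. Since ${\fa}^l\Coker f=0$ we have ${\fa}^l N\subseteq\Image f$, so $\Coker f=N/{\Image f}$ is a quotient of $N/{{\fa}^l N}$. By condition (2) in Lemma \ref{L:II}, $N/{{\fa}N}\in\mS$, hence Remark \ref{r:colpow} gives $N/{{\fa}^l N}\in\mS$; as $\mS$ is closed under quotients, $\Coker f\in\mS\subset\mC$.

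Next I would recover $\Ker f\in\mC$ by feeding this into the short exact sequences
\[
0\to\Image f\to N\to\Coker f\to 0,\qquad 0\to\Ker f\to M\to\Image f\to 0.
\]
Applying condition (3) of Lemma \ref{L:II} to the first sequence (both $N$ and $\Coker f$ lie in $\mC$, the latter by the previous paragraph) yields $\Image f\in\mC$; applying condition (3) again to the second sequence (both $M$ and $\Image f$ lie in $\mC$) yields $\Ker f\in\mC$.

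I do not expect any genuine obstacle: the only step needing a small observation is the passage from $N/{{\fa}N}\in\mS$ to $\Coker f\in\mS$, for which the finite-power strengthening in Remark \ref{r:colpow} is exactly what is required — dual to the way $\Ker f\subseteq 0\underset{M}{:}{\fa}^l$ was used in Lemma \ref{L:I_0}. Worth flagging is the mild asymmetry with Lemma \ref{L:I_0}: there the \emph{kernel} was shown to belong to $\mS$, whereas here it is the \emph{cokernel} that does, while $\Ker f$ only lands in $\mC$.
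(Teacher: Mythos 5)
Your proposal is correct and follows essentially the same route as the paper's own proof: observe that $\Coker f$ is a homomorphic image of $N/{\fa}^l N$, hence lies in $\mS\subset\mC$ by condition (2) and Remark \ref{r:colpow}, and then apply condition (3) to the two exact sequences $0\to\Image f\to N\to\Coker f\to 0$ and $0\to\Ker f\to M\to\Image f\to 0$ to get $\Image f\in\mC$ and then $\Ker f\in\mC$. No gaps.
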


\begin{proof}
	Since $\Coker f$ is a homomorhic image of $N/{\fa}^lN$, it follows from 
	(2) in Lemma \ref{L:II} and remark \ref{r:colpow} that 
	$\Coker f\in\mS$ and hence 
	by (1) in Lemma \ref{L:II} that $\Coker f\in\mC$. 
	By property (3) in Lemma \ref{L:II} using the exact sequences 
	$0\to\Image f\to N\to\Coker f\to 0$  and 
	$0\to\Ker f\to M\to\Image f\to 0$ we first get that 
	$\Image f$ belongs to $\mC$ and then that $\Ker f$ belongs to $\mC$.
\end{proof}

\begin{lem}\label{L:II_n}
	Let $ X_{n+1}\overset{d_n}{\to} X_n\to\dots
	\to X_1\overset{d_0}{\to} X_0\to 0$ be a  complex. 
	Assume that $X_i\in\mC$ for all $i=0,\dots,n$ and
	that there is $l$, such that ${\fa}^l$ annihilates the homology modules
	$\h_i$ for $i=0,\dots,n$. 
	Then $\Ker d_n$ belongs to  $\mS$ {\rm(}hence also to $\mC${\rm)}  
	and the homology modules 
	$\h_i$ belong to $\mS$ for 
	$i=0,\dots,n$.
\end{lem}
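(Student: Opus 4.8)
The plan is to prove Lemma~\ref{L:II_n} by induction on $n$, running the exact dual of the argument in the proof of Lemma~\ref{L:I_n}: chain complexes replace cochain complexes, kernels of successive differentials replace cokernels, and Lemma~\ref{L:II_0} takes over the role played there by Lemma~\ref{L:I_0}. Throughout I would use the three standing properties of $\mC$ from Lemma~\ref{L:II} — above all that $M/\fa M\in\mS$ for every $M\in\mC$, which Remark~\ref{r:colpow} upgrades to $M/\fa^{l}M\in\mS$ for all $l$ — together with the Serre-subcategory axioms for $\mS$ (closure under submodules, quotients, extensions and finite direct sums); and, exactly as in Lemma~\ref{L:I_n}, I regard all terms of the complex, $X_{n+1}$ included, as lying in $\mC$.

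For the base case $n=0$ the complex is simply $X_{1}\overset{d_{0}}{\to}X_{0}\to 0$, so $\h_{0}=\Coker d_{0}$; from $\fa^{l}\h_{0}=0$, i.e.\ $\fa^{l}X_{0}\subseteq\Image d_{0}$, the module $\h_{0}$ is a homomorphic image of $X_{0}/\fa^{l}X_{0}\in\mS$, hence $\h_{0}\in\mS$, while Lemma~\ref{L:II_0} applied to $f=d_{0}$ gives $\Ker d_{0}\in\mC$. For $n>0$ I would apply the inductive hypothesis to the truncated complex $X_{n}\overset{d_{n-1}}{\to}X_{n-1}\to\dots\to X_{1}\overset{d_{0}}{\to}X_{0}\to 0$ — whose terms $X_{0},\dots,X_{n-1}$ lie in $\mC$ and whose homology modules are $\h_{0},\dots,\h_{n-1}$, all killed by $\fa^{l}$ — obtaining $\h_{0},\dots,\h_{n-1}\in\mS$ and $\Ker d_{n-1}\in\mC$. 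Since $d_{n}d_{n-1}=0$ gives $\Image d_{n}\subseteq\Ker d_{n-1}$, the map $d_{n}$ corestricts to $g\colon X_{n+1}\to\Ker d_{n-1}$ with $\Ker g=\Ker d_{n}$ and $\Coker g=\Ker d_{n-1}/\Image d_{n}=\h_{n}$; as $X_{n+1}$ and $\Ker d_{n-1}$ both lie in $\mC$ and $\fa^{l}\Coker g=\fa^{l}\h_{n}=0$, Lemma~\ref{L:II_0} applies to $g$ and yields $\h_{n}=\Coker g\in\mS$ and $\Ker d_{n}=\Ker g\in\mC$. Combined with the preceding step, this gives $\h_{0},\dots,\h_{n}\in\mS$ and $\Ker d_{n}\in\mC$.

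The main obstacle is not any single computation but the bookkeeping that keeps the homology modules on the $\mS$ side rather than merely on the $\mC$ side. Lemma~\ref{L:II_0} places only cokernels in $\mS$ (kernels it places just in $\mC$), so at each stage one must be sure the module one wants in $\mS$ really shows up there as the cokernel of a map into some $N\in\mC$ killed by a power of $\fa$ — hence a quotient of $N/\fa^{l}N\in\mS$ — which is precisely how each $\h_{i}$ is forced into $\mS$. One must likewise check at each reduction that the truncated complex still has every term in $\mC$ (this is exactly where the output $\Ker d_{n-1}\in\mC$ of the previous step re-enters as an input) and that its homology is correctly matched with the original $\h_{0},\dots,\h_{n-1}$ so that the uniform annihilation hypothesis is inherited. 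The induction as sketched delivers $\Ker d_{n}\in\mC$; to reach the sharper $\Ker d_{n}\in\mS$ asserted in the statement one uses, in addition, that a power of $\fa$ annihilates $\Ker d_{n}$ (the homology at $X_{n+1}$), after which the filtration $\Ker d_{n}\supseteq\fa\Ker d_{n}\supseteq\fa^{2}\Ker d_{n}\supseteq\dots$, whose successive quotients are quotients of finite direct sums of copies of $\Ker d_{n}/\fa\Ker d_{n}\in\mS$, forces $\Ker d_{n}\in\mS$ just as in Remark~\ref{r:colpow}.
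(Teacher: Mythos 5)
Your induction is the paper's own argument: the base case is Lemma \ref{L:II_0}, and for $n>0$ you factor $d_n$ through $\Ker d_{n-1}$ and apply Lemma \ref{L:II_0} to $g\colon X_{n+1}\to\Ker d_{n-1}$, feeding in the inductive hypothesis for the truncated complex. You even correct a small slip in the paper's proof, which writes $\Coker f=\h_{n-1}$ where it should be $\h_{n}$, and your standing convention that $X_{n+1}\in\mC$ is exactly what this application of Lemma \ref{L:II_0} requires (the statement lists only $i=0,\dots,n$, but the paper's proof uses $X_{n+1}\in\mC$ in the same way, and in all later applications every term of the complex lies in $\mC$). Up to the conclusions $\h_0,\dots,\h_n\in\mS$ and $\Ker d_n\in\mC$ your proof is correct and essentially identical to the paper's.

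The final paragraph, however, does not work. To promote $\Ker d_n$ from $\mC$ to $\mS$ you invoke ``that a power of $\fa$ annihilates $\Ker d_n$ (the homology at $X_{n+1}$)'', but this is not among the hypotheses: annihilation is assumed only for $\h_0,\dots,\h_n$, and nothing at all is assumed about the complex at the spot $X_{n+1}$. Indeed the $\mS$-claim is false as literally stated: take $X_0=\dots=X_n=0$, all differentials zero, and $X_{n+1}$ any module in $\mC\setminus\mS$ (for instance $\mS$ the finite modules, $\mC$ the class of Theorem \ref{T:E}, and $X_{n+1}=\E(R/\fm)$ over a local ring with $\fa=\fm$); then $\h_i=0$ for $i\le n$ while $\Ker d_n=X_{n+1}\notin\mS$. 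The ``$\mS$'' in the statement is evidently a misprint for ``$\mC$'': compare the dual Lemma \ref{L:I_n}, which claims only $\Coker d^n\in\mC$, and note that the paper's proof derives, and the inductive step together with Theorems \ref{T:E} and \ref{T:ET} use, nothing more than $\Ker d_n\in\mC$. So you should drop the upgrade (or add the annihilation of the homology at $X_{n+1}$ as an explicit extra hypothesis, under which your filtration argument is fine --- it is just Remark \ref{r:colpow} applied to $\Ker d_n\in\mC$); as written, it proves the stated claim only by manufacturing a hypothesis the lemma does not contain.
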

\begin{proof}
	The proof is by induction on $n$.
	The case case $n=0$ is just Lemma \ref{L:II_0}.
	Assume that $n>0$ and that we know that 
	$\Ker d_{n-1}$ belongs to $\mC$ and that the homology modules 
	$\h_i$ belong to
	$\mS$ for $i=0,\dots,n-1$.
	Consider the map $f : X_{n+1}\to \Ker d_{n-1}$ induced by $d_n$. 
	Since $\Ker f=\Ker d_n$ and $\Coker f=\h_{n-1}$ we get by \ref{L:II_0} that 
	$\Ker d_n\in\mC$ and that $\h_{n-1}\in\mS$.   
\end{proof}
We now prove the equivalences in Theorem \ref{T:E}. 
\begin{proof}[Proof of Theorem~\ref{T:E}]
	We have already noticed that the conditions (iii) and (iv) are equivalent.
	Let ${\mC}_i$, ${\mC}_{ii}$, ${\mC}_{iii}$ and ${\mC_{iv}}$ be the classes of 
	modules which satisfy the respective condition in Theorem \ref{T:E}.
	We have already noticed that $\mC_{iii}$ and $\mC_{iv}$ are equal.
	Let $F_{\bullet}\to R/\fa$ be a resolution of $R/\fa$ consisting of 
	finite free modules and let $\K_{\bullet}=\K_\bullet (x_1,\dots,x_s)$ be the 
	Koszul complex  of the sequence $x_1,\dots,x_s$. 
	The classes $\mC_i$ and $\mC_{iii}$ 
	both  satisfy the three  conditions in \ref{L:I}.
	If $M$ is in $\mC_i$ then  by \ref{L:I} the cohomology modules of the
	cochain complex $X^{\bullet}=\Hom_R(\K_{\bullet}, M)$ 
	are in $\mS$ and therefore $M$ is in $\mC_{iii}$.
	Conversely let $M$ be in $\mC_{iii}$. 
	Consider the cochain complex 
	$X^{\bullet}=\Hom_R(F_{\bullet},M)$. 
	We get using Lemma \ref{L:I}, that $M$   belongs to $\mC_i$. 
	To prove that the classes $\mC_{ii}$ and $\mC_{iv}$ are the same  
	we consider the complexes $\K_{\bullet}\otimes M$ and 
	$F_{\bullet}\otimes M$ and use Lemma~\ref{L:II}. 
\end{proof}

\begin{thm}\label{T:ET}
	For each $R$-module $M$ 
	which satisfies the equivalent conditions of Theorem \ref{T:E}
	and each finite $R$-module $N$ with support 
	in $\V(\fa)$, the modules 
	$\Ext_R^i(N, M)$ and $\Tor_i^R(N,M)$ belong to $\mS$.  
\end{thm}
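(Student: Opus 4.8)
The plan is to run a finite free resolution of $N$ through the two technical lemmas of this section. The crucial point is that Lemmas~\ref{L:I} and~\ref{L:II} put no finiteness hypothesis on the (co)homology of the complex they are applied to, beyond the requirement that $\fa$ lie in its radical annihilator, and this requirement is automatic here. Indeed, since $N$ is a finite module with $\Supp_R(N)\subseteq\V(\fa)$ we have $\fa\subseteq\sqrt{\Ann_R N}$, so $\fa^t N=0$ for some $t$; consequently $\fa^t$ annihilates $\Ext_R^i(N,M)$ and $\Tor_i^R(N,M)$, and hence $\fa\subseteq\sqrt{\Ann_R\Ext_R^i(N,M)}$ and $\fa\subseteq\sqrt{\Ann_R\Tor_i^R(N,M)}$ for every $i$ (vacuously so when the module is zero).

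So first I would fix a resolution $F_{\bullet}\to N$ by finitely generated free $R$-modules, which exists since $R$ is Noetherian and $N$ is finite. For the statement about $\Ext$, apply Lemma~\ref{L:I} to the cochain complex $X^{\bullet}=\Hom_R(F_{\bullet},M)$, taking for $\mC$ the class $\mC_{i}$ of $R$-modules satisfying condition~(i) of Theorem~\ref{T:E}; it was checked in the proof of that theorem that $\mC_{i}$ fulfils the three hypotheses of Lemma~\ref{L:I}. Each term $\Hom_R(F_i,M)$ is a finite direct sum of copies of $M$, hence lies in $\mC_{i}$ (by definition $\mC_{i}$ is closed under finite direct sums, as $\Ext$ commutes with them and $\mS$ is a Serre subcategory), and $\h^i(X^{\bullet})=\Ext_R^i(N,M)$ has radical annihilator containing $\fa$ by the first paragraph. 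Lemma~\ref{L:I} then gives $\Ext_R^i(N,M)\in\mS$ for all $i$. The statement about $\Tor$ is symmetric: apply Lemma~\ref{L:II} with $\mC=\mC_{ii}$ (which satisfies the hypotheses of Lemma~\ref{L:II}, again by the proof of Theorem~\ref{T:E}) to the chain complex $X_{\bullet}=F_{\bullet}\otimes_R M$, whose terms are finite direct sums of copies of $M$ and whose homology is $\h_i(X_{\bullet})=\Tor_i^R(N,M)$.

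I do not expect any real obstacle. The only things that need verification are the two bookkeeping facts used above --- that $\mC_{i}$ and $\mC_{ii}$ meet the hypotheses of Lemmas~\ref{L:I} and~\ref{L:II} and are closed under finite direct sums, and that a power of $\fa$ annihilates $N$ --- both of which are immediate (the first was already established in the proof of Theorem~\ref{T:E}). A possible alternative would be a dévissage argument: filter $N$ by submodules with factors $R/\fp$, $\fp\in\Supp_R(N)\subseteq\V(\fa)$, use the long exact sequences of $\Ext$ and $\Tor$ together with closure of $\mS$ under submodules, quotients and extensions to reduce to $N=R/\fp$, and then reduce to $N=R/\fa$ via the surjection $R/\fa\to R/\fp$. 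That route, however, needs a further induction to control the kernel $\fp/\fa$, so passing directly through Lemmas~\ref{L:I} and~\ref{L:II} is cleaner.
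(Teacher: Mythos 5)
Your proposal is correct and follows essentially the same route as the paper: the paper's proof also takes a finite free resolution $F_{\bullet}$ of $N$, lets $\mC$ be the class of modules satisfying the equivalent conditions of Theorem \ref{T:E}, and applies Lemma \ref{L:I} to $\Hom_R(F_{\bullet},M)$ and Lemma \ref{L:II} to $F_{\bullet}\otimes M$. Your explicit verifications (that $\fa^t N=0$ forces $\fa$ into the radical annihilators of $\Ext_R^i(N,M)$ and $\Tor_i^R(N,M)$, and that the terms of the complexes, being finite direct sums of copies of $M$, lie in the class) are exactly the details the paper leaves implicit.
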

\begin{proof}
	Let $\mC$ be the class of $R$-modules which satidfy the
	equivalent conditions in Theorem \ref{T:E}. 
	Apply Lemma \ref{L:I} and Lemma \ref{L:II}
	to the complexes $X^{\bullet}=\Hom_R(F_{\bullet},M)$ and 
	$X_{\bullet}=F_{\bullet}\otimes M$.
\end{proof}


\section{Cofiniteness of local cohomology and generalized local cohomology}


The following two result generalize and decrease the assumptions on $M$ in \cite[Theorem 3.4 (i) and Corollary 3.5 (i)]{AB}. 

\begin{thm}
	\label{homext} Let $R$ be a Noetherian ring and $\fa$ an ideal of $R$. Let $t\in\Bbb{N}_0$ be an integer and $M$ an $R$-module such that $\Ext^i_R(R/\fa,M)$ are finite for all $i\leq t+1$. Let the $R$-modules $\lc^i_\fa(M)$ are ${\rm FD_{<2}}$ ~ $R$-modules for all $i<t$.
	Then, the $R$-modules $\lc^i_\fa(M)$ are $\fa$-cofinite for all
	$i<t$.
\end{thm}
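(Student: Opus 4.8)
The plan is to induct on $t$, the crux being to establish the base case $t=1$ and then to peel off the bottom cohomology module via the standard device of killing $\G_\fa(M)$. For $t=0$ there is nothing to prove. For $t=1$ we must show $\lc^0_\fa(M)=\G_\fa(M)$ is $\fa$-cofinite, knowing that $\Ext^i_R(R/\fa,M)$ is finite for $i\le 2$ and that $\G_\fa(M)$ is $\mathrm{FD}_{<2}$, i.e. there is a finite submodule $X'\subseteq\G_\fa(M)$ with $\dim_R(\G_\fa(M)/X')<2$. First I would reduce to the case $\G_\fa(M)=0$ for the quotient: replacing $M$ by $M/\G_\fa(M)$ changes neither the hypotheses (since $\G_\fa(M)$ is $\fa$-torsion, a short exact sequence argument on $\Ext^\bullet_R(R/\fa,-)$ shows the finiteness of the relevant $\Ext$'s transfers, once one knows $\G_\fa(M)$ itself is $\fa$-cofinite, which is exactly what we are proving — so this reduction must be done carefully, perhaps by first proving $\G_\fa(M)$ cofinite directly).

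The key technical point for the base case is this: an $\fa$-torsion module $T$ that is $\mathrm{FD}_{<2}$ and for which $\G_\fa$-type $\Ext$-data is finite should be $\fa$-cofinite. Concretely, take the finite submodule $X'\subseteq T=\G_\fa(M)$ with $\dim(T/X')\le 1$. Since $T/X'$ is $\fa$-torsion of dimension $\le 1$ and finitely generated modulo a module of dimension $0$ (or directly: $\fa$-torsion modules of dimension $\le 1$ that are "small" are $\fa$-cofinite), one invokes the known $1$-dimensional cofiniteness results together with the hypothesis $\Ext^i_R(R/\fa,M)$ finite for $i\le 1$ to conclude $\Ext^i_R(R/\fa,T/X')$ is finite for $i\le 1$, hence for all $i$ by the dimension-$\le 1$ theory (Theorem \ref{1.1}(b) applied pointwise, or the characterization of cofinite modules of dimension $\le 1$ due to Bahmanpour–Naghipour). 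Then the exact sequence $0\to X'\to T\to T/X'\to 0$ with $X'$ finite (hence $\fa$-cofinite, being $\fa$-torsion) and $T/X'$ $\fa$-cofinite forces $T$ to be $\fa$-cofinite, using that the class of $\fa$-cofinite modules is Serre-like enough in this range (extensions of cofinite by cofinite are cofinite when supports lie in $\V(\fa)$).

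For the inductive step, assume the result for $t-1$ and suppose the hypotheses hold for $t$. Having shown $\G_\fa(M)$ is $\fa$-cofinite, set $\overline M=M/\G_\fa(M)$. Then $\lc^0_\fa(\overline M)=0$, $\lc^i_\fa(\overline M)\cong\lc^i_\fa(M)$ for all $i\ge 1$, and from the long exact $\Ext$-sequence together with cofiniteness of $\G_\fa(M)$ (which gives finiteness of $\Ext^i_R(R/\fa,\G_\fa(M))$ for all $i$) one deduces $\Ext^i_R(R/\fa,\overline M)$ is finite for all $i\le t+1$. Choosing a non-zerodivisor-type element, or rather using that $\depth_\fa\overline M>0$ one produces $x\in\fa$ regular on $\overline M$; from $0\to\overline M\xrightarrow{x}\overline M\to\overline M/x\overline M\to 0$ one shifts: $\Ext^i_R(R/\fa,\overline M/x\overline M)$ is finite for $i\le t$, and $\lc^i_\fa(\overline M/x\overline M)$ sits in exact sequences built from $\lc^i_\fa(\overline M)$ and $\lc^{i+1}_\fa(\overline M)$, so it is $\mathrm{FD}_{<2}$ for $i<t-1$ (the class $\mathrm{FD}_{<2}$ being Serre, hence closed under the sub/quotient operations appearing). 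By the induction hypothesis applied to $\overline M/x\overline M$ and $t-1$, the modules $\lc^i_\fa(\overline M/x\overline M)$ are $\fa$-cofinite for $i<t-1$; feeding this back through the $x$-multiplication sequences and using Melkersson's criterion (a module $N$ with $\Supp N\subseteq\V(\fa)$ is $\fa$-cofinite once $(0:_N x)$ and $N/xN$ are $\fa$-cofinite, in the relevant degrees) one climbs back up to get $\lc^i_\fa(\overline M)=\lc^i_\fa(M)$ $\fa$-cofinite for $i<t$.

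The main obstacle I anticipate is bookkeeping the index shifts so that the $\mathrm{FD}_{<2}$ hypothesis and the finiteness of $\Ext^i$ for $i\le t+1$ survive each reduction with exactly the right ranges — in particular the base case, where one genuinely needs the structure theory of $\fa$-cofinite modules of dimension $\le 1$ (that $\fa$-torsion $\mathrm{FD}_{<2}$ modules with finitely many of the $\Ext$'s finite are cofinite) rather than a soft formal argument; everything else is the familiar "kill torsion, divide by a regular element, induct" machinery combined with Theorem \ref{T:ET} and Melkersson's cofiniteness criterion.
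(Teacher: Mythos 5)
Your base case is essentially the paper's: you extract finiteness of $\Hom_R(R/\fa,\G_\fa(M))$ and $\Ext^1_R(R/\fa,\G_\fa(M))$ from the long exact sequence of $0\to\G_\fa(M)\to M\to M/\G_\fa(M)\to 0$ via $\Hom_R(R/\fa,M/\G_\fa(M))=0$, then use the ${\rm FD}_{<2}$ filtration $0\to X'\to\G_\fa(M)\to\G_\fa(M)/X'\to 0$, the cofiniteness criterion for $\fa$-torsion modules of dimension $\le 1$ (Bahmanpour--Naghipour/Melkersson), and closure of cofinite modules under extensions. This is exactly the route of \cite[Theorem 3.4(i)]{AB}, to which the paper's proof defers.

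The inductive step is where you deviate, and there is a genuine gap. You pass to $\overline{M}=M/\G_\fa(M)$ and claim that ``$\depth_\fa\overline{M}>0$'' produces $x\in\fa$ regular on $\overline{M}$. For an arbitrary (non-finitely generated) module this is unjustified: $\G_\fa(\overline{M})=0$ only says $\fa\not\subseteq\fp$ for every $\fp\in\Ass_R(\overline{M})$, and when $\Ass_R(\overline{M})$ is infinite prime avoidance fails --- for instance with $R=k[x,y]$, $\fa=(x,y)$ and $\overline{M}=\bigoplus_f R/(f)$, the sum taken over the irreducible $f\in\fa$, one has $\G_\fa(\overline{M})=0$ yet every element of $\fa$ is a zerodivisor on $\overline{M}$; nothing in your argument shows the hypotheses exclude such behaviour. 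Moreover, even granting such an $x$, your ``climb back'' is incomplete: the long exact sequence gives $0\to\lc^i_\fa(\overline{M})/x\lc^i_\fa(\overline{M})\to\lc^i_\fa(\overline{M}/x\overline{M})\to(0:_{\lc^{i+1}_\fa(\overline{M})}x)\to 0$, and cofiniteness of the middle term does not by itself yield cofiniteness of the two outer terms, which is what Melkersson's criterion requires. The paper (following \cite[Theorem 3.4(i)]{AB}, and as spelled out later in the proof of Theorem \ref{cof4}) avoids both problems by instead embedding $\overline{M}$ into an $\fa$-torsion free injective module $E$ and applying the induction hypothesis to $L=E/\overline{M}$, using $\lc^i_\fa(L)\cong\lc^{i+1}_\fa(\overline{M})$ and $\Ext^i_R(R/\fa,L)\cong\Ext^{i+1}_R(R/\fa,\overline{M})$ for $i\ge 0$ together with $\lc^i_\fa(\overline{M})\cong\lc^i_\fa(M)$ for $i\ge 1$; replacing your regular-element step by this embedding repairs the argument. (Your device is the right one when $M$ is finite --- compare the proof of Theorem \ref{sez3.12} --- but the present theorem concerns arbitrary $M$.)
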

\proof  We use induction on $t$. The exact sequence 

$$0\lo \G_\fa(M)\lo M \lo M/\G_\fa(M)
\lo 0,\,\,\,\,\,\,(*)$$

induces the following exact sequence:

$$0\longrightarrow {\Hom}_R(R/\fa,\G_\fa(M))\lo{\Hom}_R(R/\fa,M)\lo {\Hom}_R(R/\fa,M/\G_\fa(M))$$$$\lo {\Ext}^1_R(R/\fa,\G_\fa(M))\lo{\Ext}^1_R(R/\fa,M).$$

Since ${\Hom}_R(R/\fa,M/\G_\fa(M))=0$ so ${\rm
	Hom}_R(R/\fa,\G_\fa(M))$ and ${\rm Ext}^{1}_R(R/\fa,\G_\fa(M))$ are finite. Assume inductively that $t>0$ and that we have established the result for non-negative integers smaller than $t$. Using inductive assumption, the rest of proof is the same as \cite[Theorem 3.4 (i)]{AB}.\qed\\


\begin{cor}
	\label{cof} Let $R$ be a Noetherian ring and $\fa$ an ideal of $R$. Let $M$ be
	an $R$-module such that the $R$-module  $\Ext^i_R(R/\fa ,M)$ is finite for all $i$ and the $R$-module $\lc^i_\fa(M)$ is ${\rm
		FD_{<2}}$ {\rm{(}}or weakly Laskerian{\rm{)}}~ $R$-modules for all $i$. Then the $R$-modules $\lc^i_\fa(M)$ are $\fa$-cofinite for all $i$.

\end{cor}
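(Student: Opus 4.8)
The plan is to derive Corollary \ref{cof} from Theorem \ref{homext} by a limiting argument. First I would observe that the hypotheses of the corollary are exactly the hypotheses of Theorem \ref{homext} with $t$ taken arbitrarily large: indeed if $\Ext^i_R(R/\fa,M)$ is finite for \emph{all} $i$, then in particular it is finite for all $i\le t+1$ for every $t$, and if $\lc^i_\fa(M)$ is $\operatorname{FD}_{<2}$ for all $i$ then in particular for all $i<t$. So for each fixed $t\in\Bbb{N}_0$, Theorem \ref{homext} gives that $\lc^i_\fa(M)$ is $\fa$-cofinite for all $i<t$. Letting $t\to\infty$ yields that $\lc^i_\fa(M)$ is $\fa$-cofinite for every $i\ge 0$, which is the assertion.

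Second I would justify the parenthetical ``or weakly Laskerian'' clause. A weakly Laskerian module is $\operatorname{FD}_{<1}$ after passing to a suitable finite submodule? More precisely, by \cite[Theorem 3.3]{B} a weakly Laskerian module is FSF, i.e.\ an extension of a finitely generated module by a module of finite support; a module of finite support is in particular $\operatorname{FD}_{<1}$, hence $\operatorname{FD}_{<2}$, and the class $\operatorname{FD}_{<2}$ is closed under extensions by finite modules. Therefore every weakly Laskerian $R$-module is $\operatorname{FD}_{<2}$, and the weakly Laskerian case is subsumed by the $\operatorname{FD}_{<2}$ case. This reduces the corollary entirely to the $\operatorname{FD}_{<2}$ statement already handled above.

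I do not expect any serious obstacle here, since the corollary is essentially a repackaging of Theorem \ref{homext}; the only point requiring a little care is the inductive bookkeeping hidden in Theorem \ref{homext} itself (which is assumed), and the verification that the weakly Laskerian hypothesis indeed implies the $\operatorname{FD}_{<2}$ hypothesis. If one wished to avoid citing \cite{B}, one could instead note directly that for a weakly Laskerian module $X$ the set $\Ass_R X$ is finite, so $X$ has a finite submodule $X'$ with $\Ass_R(X/X')\subseteq\Min(\fa)\cap\Max(R)$ or more simply with $X/X'$ supported in finitely many primes, whence $\dim_R(X/X')\le 0<2$. Either way the passage is routine.

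In writing this up I would simply state: ``This is immediate from Theorem \ref{homext}: for every $t$ the hypotheses hold, so $\lc^i_\fa(M)$ is $\fa$-cofinite for all $i<t$; since $t$ is arbitrary the conclusion follows for all $i$. The weakly Laskerian case follows because every weakly Laskerian module is $\operatorname{FD}_{<2}$.'' The main thing to get right is to make explicit that ``for all $i$'' in the hypothesis feeds ``for all $i\le t+1$'' in Theorem \ref{homext} uniformly in $t$, so no circularity or gap appears.
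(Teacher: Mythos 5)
Your argument is correct and is essentially the paper's own proof: the paper disposes of Corollary \ref{cof} with the single line ``Follows by Theorem \ref{homext}'', i.e.\ exactly your observation that the hypotheses hold for every $t$, so $\fa$-cofiniteness of $\lc^i_\fa(M)$ for all $i<t$ and all $t$ gives it for all $i$. One small inaccuracy in your side remark on the weakly Laskerian case: a module with finite support has dimension at most $1$ (its support is specialization-closed, so each $V(\fp)$ it contains is finite), not dimension $0$ or ${\rm FD}_{<1}$ as you wrote; this still gives ${\rm FD}_{<2}$, so the reduction of the weakly Laskerian hypothesis to the ${\rm FD}_{<2}$ one via \cite[Theorem 3.3]{B} goes through unchanged.
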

\proof  Follows by Theorem \ref{homext}.\qed\\

 The following corollary is a characterization of cofiniteness of local cohomology modules which are ${\rm
 	FD_{<2}}$ $R$-module. 
\begin{cor}
	Let $R$ be a Noetherian ring and $\fa$ an ideal of $R$. Let $M$ be
	an $R$-module such that  the $R$-modules $\lc^i_\fa(M)$ are ${\rm
		FD_{<2}}$ {\rm{(}}or weakly Laskerian{\rm{)}}~ $R$-modules for all $i$. Then, the following conditions are equivalent:
	
	{\rm(i)} The $R$-module  $\Ext^i_R(R/\fa ,M)$ is finite for all $i$.
	
	{\rm(ii)} The $R$-module $\lc^i_\fa(M)$ is $\fa$-cofinite for all $i$.	
\end{cor}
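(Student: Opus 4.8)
The plan is to reduce the statement directly to the corollary \texttt{cof} already established, so the work is essentially to check that the two implications follow from what we have.

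First I would prove (i)$\Rightarrow$(ii). Under the standing hypothesis all the $\lc^i_\fa(M)$ are $\operatorname{FD}_{<2}$ (equivalently weakly Laskerian), and assumption (i) says $\Ext^i_R(R/\fa,M)$ is finite for every $i$. These are exactly the hypotheses of Corollary \ref{cof}, so we conclude that $\lc^i_\fa(M)$ is $\fa$-cofinite for all $i$, which is (ii). No new argument is needed here.

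For the converse (ii)$\Rightarrow$(i), I would argue that if every $\lc^i_\fa(M)$ is $\fa$-cofinite then, by definition of $\fa$-cofiniteness, $\Ext^j_R(R/\fa,\lc^i_\fa(M))$ is finite for all $i,j$. One then feeds this into the standard Grothendieck spectral sequence $\mathrm{E}_2^{p,q}=\Ext^p_R(R/\fa,\lc^q_\fa(M))\Rightarrow \Ext^{p+q}_R(R/\fa,M)$ (or, more elementarily, an induction on homological degree using the fact that a module whose finitely many subquotients in a bounded filtration are all finite is itself finite): since every page of the spectral sequence is built from finite modules by taking subquotients, the abutment $\Ext^n_R(R/\fa,M)$ has a finite filtration with finite quotients and is therefore finite for each $n$, giving (i). Alternatively one can note that this direction is the ``easy'' half of \cite[Theorem 2.1]{Mel}-type statements and invoke it directly.

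I expect the only real point requiring care to be the converse direction, specifically making sure the spectral sequence (or filtration) argument is legitimate: one must observe that $R/\fa$ is a finite $R$-module so $\Ext^p_R(R/\fa,-)$ commutes with the relevant limits, and that the spectral sequence converges (it lies in the first quadrant, so convergence is automatic and the filtration on the abutment is finite). Everything else is a direct citation of Corollary \ref{cof} and the definition of cofiniteness, so I would keep the write-up short.
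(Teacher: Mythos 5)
Your proposal is correct, and its overall shape matches the paper: the implication (i)$\Rightarrow$(ii) is, exactly as you say, an immediate application of Corollary \ref{cof}, with no further argument needed. For the converse the paper simply cites \cite[Proposition 3.9]{Mel}, whereas you reprove that statement from scratch via the Grothendieck spectral sequence
$\E_2^{p,q}=\Ext^p_R(R/\fa,\lc^q_\fa(M))\Rightarrow \Ext^{p+q}_R(R/\fa,M)$;
this is essentially the content of the cited proposition, so your route is self-contained rather than different in substance. The spectral-sequence argument itself is sound: the sequence is first quadrant, so for each $n$ the abutment carries a finite filtration whose quotients are subquotients of the finite modules $\E_2^{p,q}$ with $p+q=n$, and finiteness is closed under subquotients and extensions. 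One small correction to your ``point requiring care'': the existence of the spectral sequence does not rest on $\Ext^p_R(R/\fa,-)$ commuting with limits, but on the facts that $\Hom_R(R/\fa,-)=\Hom_R(R/\fa,\G_\fa(-))$ (because $R/\fa$ is $\fa$-torsion) and that $\G_\fa(E)$ is again injective for $E$ injective over a Noetherian ring, so that the composite-functor spectral sequence applies. It is also worth noting, as both arguments show, that the ${\rm FD_{<2}}$ (weakly Laskerian) hypothesis plays no role in (ii)$\Rightarrow$(i); it is only needed for (i)$\Rightarrow$(ii).
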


\proof (i)$\Rightarrow$(ii) Follows by  Corollary \ref{cof}.

(ii)$\Rightarrow$(i) It follows by \cite[Proposition 3.9]{Mel}.\qed\\


\begin{thm}\label{cof3}
	Let $M$ be an $R$-module and suppose one of the following cases holds:
	\begin{itemize}
		\item[(a)] $\ara (\fa)\leq 1$;
		\item[(b)]  $\dim R/\fa \leq 1$;
		\item[(c)]  $\dim R\leq 2$.
	\end{itemize}
	Then,  $\lc^{i}_\fa(M)$ is $\fa$-cofinite for all $i$ if and only if $\Ext^i_R(R/\fa ,M)$ is a finite $R$-module for all $i$.
\end{thm}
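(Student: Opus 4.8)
The plan is to prove both implications of Theorem~\ref{cof3}, the harder one being the "if" direction, i.e. that finiteness of all $\Ext^i_R(R/\fa,M)$ forces $\fa$-cofiniteness of all $\lc^i_\fa(M)$ in the three listed cases. The "only if" direction is immediate from \cite[Proposition 3.9]{Mel} (as already used above), so I concentrate on the converse.

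First I would reduce the statement to a statement about a suitable Serre subcategory. By Theorem~\ref{T:E}, the hypothesis that $\Ext^i_R(R/\fa,M)$ is finite for all $i$ is equivalent to $\Tor_i^R(R/\fa,M)$ and the Koszul (co)homology $\h^i(x_1,\dots,x_s;M)$ being finite for all $i$, where $\fa=(x_1,\dots,x_s)$. In case (a), $\ara(\fa)\le 1$ means $\fa$ is generated up to radical by one element $x$, so $\lc^i_\fa(M)=0$ for $i\ge 2$ and $\lc^0_\fa(M)=\G_\fa(M)$, $\lc^1_\fa(M)=\Coker(M\to M_x)$ fit in the exact sequence $0\to\G_\fa(M)\to M\to M_x\to\lc^1_\fa(M)\to 0$; here I would argue directly with the change-of-ring spectral sequence or the long exact sequence after applying $\Hom_R(R/\fa,-)$ and $R/\fa\otimes_R-$, using that $M_x$ has $\lc^i_\fa=0$ for $i\ge 1$ to transfer finiteness of $\Ext^i_R(R/\fa,M)$ to finiteness of $\Ext^i_R(R/\fa,\lc^1_\fa(M))$ and $\Ext^i_R(R/\fa,\G_\fa(M))$, and invoking Melkersson's criterion that an $\fa$-torsion module $T$ is $\fa$-cofinite iff $\Ext^i_R(R/\fa,T)$ is finite for $i=0,1$ (in the $\ara\le1$ case this two-term criterion is exactly \cite[Theorem 2.1]{Mel} combined with the fact that, over such $\fa$, cofiniteness is detected by $\Ext^{0},\Ext^{1}$).

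For cases (b) and (c) I would combine Corollary~\ref{cof} with the known dimension estimates on local cohomology. In case (b), $\dim R/\fa\le 1$, a theorem of Bahmanpour--Naghipour type (indeed \cite{BN}, quoted as Theorem~\ref{1.1}(b)) gives that every $\lc^i_\fa(M)$ for $M$ finite is $\fa$-cofinite, and more to the point the modules $\lc^i_\fa(M)$ are ${\rm FD}_{<1}$ after a finite submodule is split off; for general $M$ one reduces modulo $\G_\fa(M)$ and uses the long exact sequence together with the fact that $\Supp_R(\lc^i_\fa(M))\subseteq\V(\fa)$ has dimension $\le 1$, so $\lc^i_\fa(M)$ is automatically ${\rm FD}_{<2}$, and then Corollary~\ref{cof} applies verbatim once we know $\Ext^i_R(R/\fa,M)$ finite for all $i$. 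In case (c), $\dim R\le 2$, the same strategy works: any $\fa$-torsion module has support of dimension $\le 2$, but one needs the sharper fact that $\lc^i_\fa(M)$ is ${\rm FD}_{<2}$, which holds because Grothendieck vanishing and the Lichtenbaum--Hartshorne style bounds force $\lc^i_\fa(M)$ to have support of dimension $\le 2-i$ in the relevant range, so that $\lc^0_\fa(M)$ and $\lc^1_\fa(M)$ are ${\rm FD}_{<2}$ and $\lc^2_\fa(M)$ is Artinian hence $\fa$-cofinite; applying Corollary~\ref{cof} closes the argument.

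The main obstacle I expect is the transfer of the finiteness hypothesis on $\Ext^i_R(R/\fa,M)$ from $M$ itself to the individual local cohomology modules $\lc^i_\fa(M)$ — i.e. verifying the "${\rm FD}_{<2}$" hypothesis in Corollary~\ref{cof} really does hold for arbitrary $M$ in cases (b) and (c), not just finite $M$. The clean way around this is to first dispatch $\G_\fa(M)$ (which is $\fa$-torsion with $\Ext^i_R(R/\fa,\G_\fa(M))$ finite, and whose support has the right small dimension, hence ${\rm FD}_{<2}$, hence $\fa$-cofinite by Corollary~\ref{cof} or directly by Melkersson's criterion) and then replace $M$ by $M/\G_\fa(M)$, which has $\G_\fa(M/\G_\fa(M))=0$ and the same higher local cohomology; an induction on $i$ using the long exact cohomology sequence of $(*)$ then propagates finiteness of $\Ext^j_R(R/\fa,-)$ through the $\lc^i_\fa(M)$ in the right range, and the dimension bounds supply the ${\rm FD}_{<2}$ property for free. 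Once that bookkeeping is set up, Corollary~\ref{cof} (equivalently Theorem~\ref{homext}) finishes all three cases uniformly.
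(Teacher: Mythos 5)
Your cases (a) and (b) are essentially sound and close to what the paper does. For (b) your observation that $\Supp_R(\lc^i_\fa(M))\subseteq\V(\fa)$ with $\dim R/\fa\le 1$ makes every $\lc^i_\fa(M)$ trivially ${\rm FD_{<2}}$, so that Corollary \ref{cof} applies directly, is a clean in-paper alternative to the paper's citation of \cite[Theorem 2.10]{Mel1}; no reduction modulo $\G_\fa(M)$ or appeal to \cite{BN} is needed there. For (a) your localization sequence $0\to\G_\fa(M)\to M\to M_x\to\lc^1_\fa(M)\to 0$ together with the two-term criterion is exactly the paper's prescription (``use \cite[Theorem 3.3]{BA} and the method of proof of \cite[Theorem 2.10]{Mel1}''), except that the criterion you invoke is not \cite[Theorem 2.1]{Mel} (that is the Ext--Tor--Koszul equivalence); the fact that, for $\ara(\fa)\le 1$, a module supported in $\V(\fa)$ is $\fa$-cofinite once $\Hom_R(R/\fa,-)$ and $\Ext^1_R(R/\fa,-)$ of it are finite is precisely \cite[Theorem 3.3]{BA}, and it is the nontrivial input, not a formal consequence of anything proved in Section 2.

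The genuine gap is case (c). The dimension estimate you quote gives $\dim\Supp_R(\lc^i_\fa(M))\le \dim R-i\le 2-i$, which for $i=0$ only yields dimension $\le 2$; since $\dim R/\fa$ can equal $2$ when $\dim R\le 2$, the module $\lc^0_\fa(M)=\G_\fa(M)$ need not be ${\rm FD_{<2}}$, so Corollary \ref{cof} cannot be applied, and your fallback of ``dispatching $\G_\fa(M)$ first because its support has the right small dimension'' fails for the same reason. In addition, for a non-finite $M$ the module $\lc^2_\fa(M)$ is only a direct limit of Artinian modules and need not be Artinian, and even Artinianness would not suffice: an Artinian module supported in $\V(\fa)$ need not be $\fa$-cofinite when $\dim R/\fa\ge 1$ (for instance $E_R(R/\fm)$ over a local ring, since $\Hom_R(R/\fa,E_R(R/\fm))$ is then not finite), so ``Artinian hence $\fa$-cofinite'' is not a valid step. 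Thus your argument does not close case (c); the paper handles it by quoting \cite[Theorem 7.10]{Mel}, which is exactly the statement in that case and whose proof is a separate argument not recovered by your sketch. Your ``only if'' direction via \cite[Proposition 3.9]{Mel} (the paper uses \cite[Corollary 3.10]{Mel}) is fine.
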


\begin{proof}
	The	case (c) follows by Theorem \cite[Theorem 7.10]{Mel}. 
	
	In the cases (a) and (b), suppose  $\lc^{i}_\fa(M)$ is $\fa$-cofinite for all $i$. It follows from \cite[Corollary 3.10]{Mel} that $\Ext^i_R(R/\fa ,M)$ is a finite $R$-module for all $i$. 
	
	To prove the converse, in the case (b), it follows by \cite[Theorem 2.10]{Mel1}. In the case (a), use \cite[Theorem 3.3]{BA} and the method of proof of \cite[Theorem 2.10]{Mel1}.
\end{proof}

\begin{lem} \label{1}
	Let $X$ be a finite $R$--module, $M$ be an arbitrary $R$--module. Then the following statements hold true.
	\begin{itemize}
		\item[(a)] $\Gamma_{\fa}(X, M)\cong \Hom_{R}(X, \Gamma_\fa (M)).$
		\item[(b)] If $\Supp_R(X)\cap \Supp_R(M)\subseteq \V(\fa)$, then $\lc^{i}_{\fa}(X, M)\cong\Ext^i_{R}(X, M)$ for all $i$.
	\end{itemize}
\end{lem}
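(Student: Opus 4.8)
The plan is to prove the two parts of Lemma~\ref{1} by standard adjunction and spectral-sequence arguments, using that $X$ is finite. For part (a), I would first reduce to the case where $X$ is free of finite rank by picking a presentation $R^m \to R^n \to X \to 0$ and using the left exactness of both functors $\Gamma_{\fa}(X,-)$ and $\Hom_R(X,\Gamma_{\fa}(-))$. More directly, I would simply exhibit a natural isomorphism: by definition $\Gamma_{\fa}(X,M) = \varinjlim_n \Hom_R(X/\fa^n X, M)$, and since $X$ is finite, $X/\fa^n X$ is a quotient of $X$ annihilated by $\fa^n$, so $\Hom_R(X/\fa^n X, M) = \Hom_R(X/\fa^n X, 0\underset{M}{:}\fa^n) \hookrightarrow \Hom_R(X, 0\underset{M}{:}\fa^n)$. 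Passing to the direct limit over $n$ and using that $\varinjlim_n (0\underset{M}{:}\fa^n) = \Gamma_{\fa}(M)$ together with the finiteness of $X$ (so that $\Hom_R(X,-)$ commutes with this particular direct limit, since a homomorphism $X \to \Gamma_{\fa}(M)$ has finitely generated, hence $\fa$-power-annihilated, image), I would obtain $\varinjlim_n \Hom_R(X, 0\underset{M}{:}\fa^n) \cong \Hom_R(X, \Gamma_{\fa}(M))$. Matching this with $\varinjlim_n \Hom_R(X/\fa^n X, M)$ gives the claimed natural isomorphism.

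For part (b), the hypothesis $\Supp_R(X) \cap \Supp_R(M) \subseteq \V(\fa)$ is the key input. I would use the Grothendieck spectral sequence associated to the composition of functors, namely $\Ext^p_R(X, \lc^q_{\fa}(M)) \Rightarrow \lc^{p+q}_{\fa}(X,M)$, which is available because $\Gamma_{\fa}(X,-) = \Hom_R(X, \Gamma_{\fa}(-))$ by part (a) and $\Hom_R(X,-)$ sends injectives to $\Gamma_{\fa}$-acyclics (indeed, for $X$ finite, $\Hom_R(X, E)$ is injective when $E$ is). The claim then follows if I can show this spectral sequence collapses: I would argue that under the support hypothesis, $\Ext^p_R(X, \lc^q_{\fa}(M)) = 0$ for all $q > 0$. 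The point is that $\lc^q_{\fa}(M)$ for $q>0$ is an $\fa$-torsion module supported in $\Supp_R(M) \cap \V(\fa)$, and one localizes: for a prime $\fp \notin \V(\fa)$ the hypothesis forces either $X_{\fp} = 0$ or $M_{\fp} = 0$, so in either case $\Ext^p_R(X, \lc^q_{\fa}(M))_{\fp} = 0$; for $\fp \in \V(\fa)$ one still needs $\lc^q_{\fa}(M)_{\fp} = \lc^q_{\fa R_{\fp}}(M_{\fp})$ to vanish — but this is where I'd instead argue more carefully that $\Supp_R(X) \cap \Supp_R(\lc^q_{\fa}(M)) \subseteq \Supp_R(X) \cap \Supp_R(M) \cap \V(\fa)$, and on this locus combine the two support conditions. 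Actually the cleanest route is: since $\Supp_R(X) \cap \Supp_R(M) \subseteq \V(\fa)$, we have $X \otimes_R M$, hence $\Tor$ and $\Ext$ modules between $X$ and $M$, supported in $\V(\fa)$, and one shows directly that $\lc^q_{\fa}(X,M) = 0$ for $q>0$ together with $\lc^0_{\fa}(X,M) = \Hom_R(X,M)$, by reducing via a finite free resolution $F_\bullet \to X$ to the ordinary statement that $\lc^q_{\fa}(M') = 0$ for $q>0$ when $M'$ is $\fa$-torsion — noting that $\Hom_R(F_i, M)$ is a finite direct sum of copies of $M$ and the relevant subquotients are $\fa$-torsion precisely because of the support hypothesis on the syzygies of $X$ tensored with $M$.

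I expect the main obstacle to be the bookkeeping in part (b): making rigorous that the support condition $\Supp_R(X) \cap \Supp_R(M) \subseteq \V(\fa)$ propagates correctly through a resolution of $X$ so that all the relevant cohomology modules computing $\lc^\bullet_{\fa}(X,M)$ are $\fa$-torsion, which is what collapses everything to $\Hom_R(X,M)$ in degree zero. The adjunction in part (a) and the existence of the spectral sequence are routine; the content is entirely in the vanishing for $q>0$, so I would organize the argument to isolate that vanishing as a separate sub-claim, proving it by taking a finite free resolution $F_\bullet \to X$, observing that $\Gamma_{\fa}(X, M)$ is computed by $\Gamma_{\fa}(\Hom_R(F_\bullet, M))$, and then using that each $\Hom_R(F_i, M) \cong M^{\oplus n_i}$ has the property that $\Gamma_{\fa}$ of it behaves well — but since we want the \emph{higher} local cohomology of $X$ against $M$ to vanish, the decisive fact is that the cohomology of $\Hom_R(F_\bullet, M)$ in positive degrees is $\Ext^{>0}_R(X, M)$, which is supported in $\Supp_R(X)\cap\Supp_R(M)\subseteq\V(\fa)$, hence $\fa$-torsion, hence $\Gamma_{\fa}$-acyclic, so the spectral sequence comparing $\lc^\bullet_{\fa}(X,M)$ with $\Ext^\bullet_R(X, M)$ degenerates.
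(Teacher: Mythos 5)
The paper gives no argument of its own here---it simply cites \cite[Lemma 2.5]{VA}---so the comparison is between your self-contained proof and an external reference; judged on its own terms, your final argument is correct and is the standard one. Part (a) is fine as you set it up: $\Hom_R(X/\fa^nX,M)\cong\Hom_R(X,(0:_M\fa^n))$ (in fact an isomorphism, not merely an injection), and finiteness of $X$ lets $\Hom_R(X,-)$ pass to the directed union $\Gamma_\fa(M)=\bigcup_n(0:_M\fa^n)$ because a map from $X$ has finitely generated image. For (b), your first attempt---collapsing $E_2^{p,q}=\Ext^p_R(X,\lc^q_\fa(M))\Rightarrow\lc^{p+q}_\fa(X,M)$ by killing all $q>0$---is indeed a dead end: that vanishing is false in general, and even if it held it would identify $\lc^i_\fa(X,M)$ with $\Ext^i_R(X,\Gamma_\fa(M))$ rather than with $\Ext^i_R(X,M)$; but you correctly abandon it, and your ``cleanest route'' works. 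Since $X$ is finite, $\Ext$ localizes in the first variable, so $\Supp_R(\Ext^q_R(X,M))\subseteq\Supp_R(X)\cap\Supp_R(M)\subseteq\V(\fa)$; over a Noetherian ring any module supported in $\V(\fa)$ is $\fa$-torsion (each cyclic submodule is finite with $\fa$ contained in the radical of its annihilator), hence $\Gamma_\fa$-acyclic, so the spectral sequence $\lc^p_\fa(\Ext^q_R(X,M))\Rightarrow\lc^{p+q}_\fa(X,M)$---equivalently your hypercohomology argument with a resolution $F_\bullet\to X$ by finite free modules---degenerates and gives $\lc^i_\fa(X,M)\cong\Ext^i_R(X,M)$ for all $i$. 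Two small points to tidy: the acyclicity you invoke (``$\Hom_R(X,E)$ is injective when $E$ is'') is what legitimises this second spectral sequence, not the first one you wrote down (for that one the relevant fact is that $\Gamma_\fa(E)$ is injective); and to see that the composite $\Gamma_\fa\circ\Hom_R(X,-)$ really has $\lc^\bullet_\fa(X,-)$ as its derived functors you should record that $\Gamma_\fa(\Hom_R(X,M))=\Hom_R(X,\Gamma_\fa(M))$ for finite $X$, which combines with your part (a).
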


\begin{proof}
	See \cite[Lemma 2.5]{VA}.
\end{proof}


\begin{thm}\label{cof4}
	Let $M$ be an $R$-module and suppose one of the following cases holds:
	\begin{itemize}
		\item[(a)] $\ara (\fa)\leq 1$;
		\item[(b)]  $\dim R/\fa \leq 1$;
		\item[(c)]  $\dim R\leq 2$.
	\end{itemize}
	Then, for any finite $R$-module $N$, $\lc^{i}_\fa(N,M)$ is $\fa$-cofinite for all $i\geq 0$ if and only if $\Ext^i_R(R/\fa ,M)$ is finite $R$-module  for all $i\geq 0$.
\end{thm}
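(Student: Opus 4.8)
The plan is to reduce the statement about generalized local cohomology $\lc^i_\fa(N,M)$ to the already-established Theorem \ref{cof3} about ordinary local cohomology $\lc^i_\fa(M)$, by combining the Serre-subcategory machinery of Section 2 with a dévissage on the finite module $N$. Throughout, let $\mathcal S$ be the Serre subcategory of $\fa$-cofinite modules (for the forward direction we will instead use the Serre subcategory of finitely generated modules). The two-sided link we want is: $\Ext^i_R(R/\fa,M)$ is finite for all $i$ $\iff$ $\lc^i_\fa(M)$ is $\fa$-cofinite for all $i$ (this is Theorem \ref{cof3} under hypotheses (a), (b) or (c)), and separately $\Ext^i_R(R/\fa,M)$ finite for all $i$ $\iff$ $M$ satisfies the equivalent conditions of Theorem \ref{T:E} with $\mathcal S=\{\text{finite modules}\}$; then Theorem \ref{T:ET} upgrades this to: $\Ext^i_R(N,M)$ and $\Tor_i^R(N,M)$ are finite for every finite $N$ with $\Supp N\subseteq\V(\fa)$.

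For the direction ``$\Ext^i_R(R/\fa,M)$ finite for all $i$ $\Rightarrow$ $\lc^i_\fa(N,M)$ is $\fa$-cofinite for all $i$'', I would argue as follows. First, by Theorem \ref{cof3} the hypothesis gives that $\lc^i_\fa(M)$ is $\fa$-cofinite for all $i$. Next I want to pass from $\lc^i_\fa(M)$ to $\lc^i_\fa(N,M)$ for an arbitrary finite $N$. Write $N$ via a finite free presentation, or better, induct on the number of generators of $N$: a short exact sequence $0\to K\to R^n\to N\to 0$ of finite modules induces a long exact sequence in $\lc^\bullet_\fa(-,M)$ relating $\lc^i_\fa(N,M)$, $\lc^i_\fa(R^n,M)\cong\lc^i_\fa(M)^n$, and $\lc^i_\fa(K,M)$. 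Since the $\fa$-cofinite modules form a Serre subcategory in all three of the cases (a), (b), (c) — this is exactly where the dimension/arithmetic-rank hypotheses are used, via the results cited in the proof of Theorem \ref{cof3} — cofiniteness propagates along the long exact sequence provided we can start the induction and control the appearance of $N$. A cleaner route: use that $\lc^i_\fa(N,M)$ is computed from a spectral sequence or the complex $\Hom_R(F_\bullet,\G_\fa(\,\cdot\,))$-type construction; concretely, take a free resolution $F_\bullet\to N$ by finite free modules and use the spectral sequence $\Ext^p_R(N,\lc^q_\fa(M))\Rightarrow\lc^{p+q}_\fa(N,M)$. Each $\lc^q_\fa(M)$ is $\fa$-cofinite, hence by \cite[Proposition 3.9]{Mel} (or \cite[Corollary 3.10]{Mel}) and the Section 2 results, $\Ext^p_R(N,\lc^q_\fa(M))$ is $\fa$-cofinite for all $p$ in the relevant cases; since $\fa$-cofinite modules form a Serre subcategory there, the abutment $\lc^{p+q}_\fa(N,M)$ is $\fa$-cofinite for all $i=p+q$.

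For the converse ``$\lc^i_\fa(N,M)$ $\fa$-cofinite for all $i$ $\Rightarrow$ $\Ext^i_R(R/\fa,M)$ finite for all $i$'', I would specialize. The natural attempt is to take $N=R$, which gives directly that $\lc^i_\fa(M)$ is $\fa$-cofinite for all $i$, and then invoke Theorem \ref{cof3} (the easy implication, which in cases (a),(b) uses \cite[Corollary 3.10]{Mel} and in case (c) uses \cite[Theorem 7.10]{Mel}) to conclude $\Ext^i_R(R/\fa,M)$ is finite for all $i$. One should double-check the degenerate possibility that $\lc^i_\fa(N,M)\ne \lc^i_\fa(R,M)$ up to the relevant range, but since $N=R$ is an allowed choice of finite module this is immediate; alternatively if one insists on a fixed $N$, use $N/\fa N\ne 0$ together with part (b) of Lemma \ref{1} or a Gruson-type filtration of $R/\fa$ by subquotients of $N\otimes(\text{finite})$ to recover control of $\Ext^i_R(R/\fa,M)$.

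The main obstacle, and the point that needs the most care, is the first direction: justifying that one may feed the finite module $N$ through the cofiniteness conclusion. The spectral sequence $\Ext^p_R(N,\lc^q_\fa(M))\Rightarrow\lc^{p+q}_\fa(N,M)$ needs the $E_2$-terms to be $\fa$-cofinite, which requires knowing that $\Ext^p_R(N,X)$ is $\fa$-cofinite whenever $X$ is $\fa$-cofinite and $\Supp N\subseteq$ well, $N$ is merely finite (not necessarily supported in $\V(\fa)$) — but $X$ is $\fa$-torsion so $\Ext^p_R(N,X)=\Ext^p_R(N/\fa^n N,X)$ for $n\gg0$ and $N/\fa^n N$ has support in $\V(\fa)$, so one reduces to finite modules supported in $\V(\fa)$ and then applies Theorem \ref{T:ET} together with the fact that $X$ $\fa$-cofinite means $X$ satisfies the equivalent conditions of Theorem \ref{T:E} for $\mathcal S=\{\fa\text{-cofinite}\}$ — this last equivalence (for $\fa$-cofinite modules, $X$ is in $\mathcal S$ iff $\Ext^i_R(R/\fa,X)\in\mathcal S$) being precisely what must be checked in each of the cases (a), (b), (c), and is where the hypotheses $\ara(\fa)\le1$, $\dim R/\fa\le1$, $\dim R\le2$ are really consumed (the Serre property of the category of $\fa$-cofinite modules under these hypotheses, due to the works cited in the proof of Theorem \ref{cof3}). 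Once that Serre property is in hand, convergence of the (first-quadrant) spectral sequence finishes the argument with no further difficulty.
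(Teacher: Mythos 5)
Your easy direction coincides with the paper's: take $N=R$ and invoke Theorem \ref{cof3}. For the substantive direction you take a genuinely different route. The paper argues by induction on $i$: it splits off the torsion part via $0\to\G_\fa(M)\to M\to M/\G_\fa(M)\to0$, uses Lemma \ref{1} to replace $\lc^i_\fa(N,\G_\fa(M))$ by $\Ext^i_R(N,\G_\fa(M))$, which is $\fa$-cofinite by \cite[Theorem 4.1]{BA}, \cite[Corollary 2.7]{Mel1}, \cite[Theorem 7.4]{Mel} in cases (a), (b), (c), and then performs a dimension shift by embedding $M/\G_\fa(M)$ into an $\fa$-torsion-free injective $E$ and applying the induction hypothesis to $L=E/(M/\G_\fa(M))$. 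You instead package the whole reduction into the Grothendieck spectral sequence $E_2^{p,q}=\Ext^p_R(N,\lc^q_\fa(M))\Rightarrow\lc^{p+q}_\fa(N,M)$, with the $E_2$-terms cofinite by Theorem \ref{cof3} together with the same preservation results. This is a clean and legitimate alternative (the spectral sequence does exist for finite $N$, since $\G_\fa$ preserves injectives and $\G_\fa(N,-)=\Hom_R(N,\G_\fa(-))$), and it buys a shorter argument at the price of making the closure properties of the class of $\fa$-cofinite modules the explicit load-bearing ingredient: you need kernels and cokernels of maps between cofinite modules (to pass from $E_2$ to $E_\infty$) and extensions (for the abutment filtration); extensions are always fine, but the kernel/cokernel closure is exactly the Abelian-category property.

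Two points need repair. First, your claim that the $\fa$-cofinite modules form a \emph{Serre} subcategory, ``due to the works cited in the proof of Theorem \ref{cof3},'' is both an overstatement and a misattribution: closure under arbitrary submodules and quotients is not what the literature provides (nor what you need, since your differentials run between cofinite modules); what is available is the Abelian-category statement, which for case (a) is Kawasaki \cite{Ka2} (cofiniteness depends only on $\sqrt\fa$) and for case (b) is Melkersson \cite{Mel1} (see also \cite{B1}), while the results quoted in the proof of Theorem \ref{cof3} are characterization theorems and do not yield it by the obvious $\Ext$-chase. In case (c), $\dim R\le2$ with $\dim R/\fa=2$, the needed closure is not covered by anything cited in this paper, so as written your case (c) is incomplete unless you import such a theorem explicitly. (In fairness, the paper's own step ``it follows by definition that $\lc^i_\fa(N,M)$ is cofinite iff $\lc^i_\fa(N,M/\G_\fa(M))$ is'' quietly uses the same kernel/cokernel closure in its long exact sequence, so this is a gap you share with the printed proof rather than introduce; but it must be cited, not derived from Theorem \ref{cof3}.) Second, the parenthetical identity $\Ext^p_R(N,X)=\Ext^p_R(N/\fa^nN,X)$ for $n\gg0$ and $X$ $\fa$-torsion is false for $p>0$ (only $\Hom$ commutes with that limit), and the related idea of applying Theorems \ref{T:E} and \ref{T:ET} with $\mS$ the class of cofinite modules is illegitimate unless Serre-ness is known. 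Both are unnecessary: \cite[Theorem 4.1]{BA}, \cite[Corollary 2.7]{Mel1} and \cite[Theorem 7.4]{Mel} are stated for arbitrary finite $N$ and give the cofiniteness of $\Ext^p_R(N,\lc^q_\fa(M))$ directly, exactly as the paper uses them.
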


\begin{proof}
	First suppose for any finite $R$-module $N$, $\lc^{i}_\fa(N,M)$ is $\fa$-cofinite for all $i\geq 0$. Let $N=R$, then it follows by Theorem \ref{cof3}, that $\Ext^i_R(R/\fa ,M)$ is finite $R$-module  for all $i\geq 0$.
	
	To prove the converse we use induction on $i$. Let $i=0$, then it follows by Lemma \ref{1} (a) that
	\begin{center}
		$\lc^{0}_\fa(N,M)=\Gamma_{\fa}(N, M)\cong \Hom_{R}(N, \Gamma_\fa (M)).$
	\end{center}	
	
	Since $\Gamma_{\fa}(M)$ is $\fa$-cofinite by Theorem \ref{cof3},
	so the assertion follows by \cite[Theorem 4.1]{BA}, \cite[Corollary 2.7]{Mel1} and \cite[Theorem 7.4]{Mel}, respectively in the cases (a), (b) and (c). Now assume that $i> 0$ and that the claim holds for $i-1$. Since $\G_{\fa}(M)$ is $\fa$-cofinite and $\Ext^i_R(R/\fa ,M)$ is finite $R$-module for all $i\ge 0$. Using the short exact sequence
	
	\begin{center}
		$0\lo \G_{\fa}(M)\lo M \lo M/\G_{\fa}(M)
		\lo 0,$
	\end{center}
	
	it is easy to see that  the $R$-modules ${\Ext}^{i}_R(R/{\fa},M/\G_{\fa}(M))$ are finite for all $i\ge 0$. Now, by applying the derived functor $\Gamma_{\fa}(N,-)$ to the same short exact sequence and using Lemma \ref{1} (b), we obtain the long exact sequence 
	
	\begin{center}
		$\dots \lo \Ext_R^i(N,\G_{\fa}(M))\overset{f_i} 
		\lo \lc^{i}_\fa(N,M)\overset{g_i}\lo \lc^{i}_\fa(N,M/\G_{\fa}(M))\overset{h_i}\lo \Ext_R^i(N,\G_{\fa}(M))\overset{f_{i+1}}\lo\lc^{i+1}_\fa(N,M)\lo\dots$
	\end{center}
	
	which yields short exact sequences 
	
	\begin{center}
		$0\lo \Ker{f_i}\lo \Ext_R^i(N,\G_{\fa}(M)) \lo \Image{f_i}
		\lo 0,$
	\end{center}
	
	\begin{center}
		$0\lo \Image{f_i}\lo \lc^{i}_\fa(N,M) \lo \Image{g_i}
		\lo 0$
	\end{center}
	
	\begin{center}
		$\text{and}$
	\end{center}
	
	\begin{center}
		$0\lo \Image{g_i}\lo \lc^{i}_\fa(N,M/\G_{\fa}(M)) \lo \Ker{f_{i+1}}
		\lo 0.$
	\end{center}
	
	Since $\Ext_R^i(N,\G_{\fa}(M))$	is  $\fa$-cofinite $R$-module for all $i\ge 0$ by \cite[Theorem 4.1]{BA}, \cite[Corollary 2.7]{Mel1} and \cite[Theorem 7.4]{Mel}, respectively in the cases (a), (b) and (c), it follows by definition that, $\lc^{i}_\fa(N,M)$ is $\fa$-cofinite $R$-module if and only if $\lc^{i}_\fa(N,M/\G_{\fa}(M))$ is  $\fa$-cofinite $R$-module for all $i\ge 0$. Therefore it suffices to show that $\lc^{i}_\fa(N,M/\G_{\fa}(M))$ is  $\fa$-cofinite $R$-module for all $i\ge 0$. To this end consider the exact sequence 
	
	\begin{center}
		$0\lo M/\G_{\fa}(M)\lo E \lo L
		\lo 0,\,\,\,\,\,\,\,\,(\dagger)$
	\end{center}
	
	in which $E$ is an injective $\fa$-torsion free module. Since 
	$\G_{\fa}(M/\G_{\fa}(M))= 0= \G_{\fa}(E)$, thus $\Hom_R(R/\fa,E)=0$ and $\G_{\fa}(N, M/\G_{\fa}(M))= 0= \G_{\fa}(N, E)$ by Lemma \ref{1} (a). Applying the derived functors of $\Hom_R(R/\fa,-)$ and $\Gamma_{\frak{a}}(N,-)$ to the short exact sequence $(\dagger)$
	we obtain, for all $i> 0$, the isomorphisms
	
	\begin{center}
		$\lc^{i-1}_\fa(N, L)\cong \lc^i_\fa(N,M/\G_{\fa}(M))$.
		
	\end{center}
	
	\begin{center}
		\text{and} 	
	\end{center}	 
	
	\begin{center}
		${\Ext}^{i-1}_R(R/{\fa},L)\cong {\Ext}^{i}_R(R/{\fa},M/\G_{\fa}(M))$.
	\end{center}
	
	From what has already been proved, we conclude that  ${\Ext}^{i-1}_R(R/{\fa},L)$ is finite for all $i>0$. Hence $\lc^{i-1}_\fa(N, L)$ is $\fa$-cofinite by induction hypothesis for all $i>0$, which yields that $\lc^i_\fa(N,M/\G_{\fa}(M))$ is $\fa$-cofinite for all $i\ge 0$, this completes the inductive step. 
\end{proof}


\begin{lem}\label{mar}
	
	Let $N$ be a finitely generated $R$-module and $M$ be an arbitrary $R$-module. Let $t$ be a non-negative integer such that ${\Ext}_R^i(N,M)$ is finite for all $0\leq i\leq t$. Then for 
	any finitely generated $R$-module $L$ with
	$\Supp_R(L)\subseteq \Supp_R(N)$, ${\Ext}_R^i(L,M)$ is finite for all integer $0\leq i\leq t$.
\end{lem}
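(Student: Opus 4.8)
The plan is to reduce the lemma, via Gruson's theorem on modules with a common support, to the case of homomorphic images of finite direct sums of copies of $N$, and then to push the long exact sequence of $\Ext$ through an induction on $t$. For a non-negative integer $s$ write $\mathcal C_s$ for the class of finitely generated $R$-modules $L$ with $\Ext^i_R(L,M)$ finite for all $0\le i\le s$; I also allow $s=-1$, in which case $\mathcal C_{-1}$ is declared to be the class of all finitely generated $R$-modules (so the condition is vacuous, consistent with $\Ext^{-1}=0$). In this notation the lemma says: if $N\in\mathcal C_t$ and $\Supp_R(L)\subseteq\Supp_R(N)$, then $L\in\mathcal C_t$.

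First I would record two purely formal properties of $\mathcal C_s$, both read off from the long exact sequence of $\Ext_R(-,M)$ attached to a short exact sequence $0\to A\to B\to C\to 0$ of finitely generated modules, using that the finitely generated $R$-modules form a Serre subcategory. (i) If $A,C\in\mathcal C_s$ then $B\in\mathcal C_s$, because for $i\le s$ the module $\Ext^i_R(B,M)$ is an extension of a submodule of $\Ext^i_R(A,M)$ by a quotient of $\Ext^i_R(C,M)$. (ii) If instead $B\in\mathcal C_s$ and $A\in\mathcal C_{s-1}$, then $C\in\mathcal C_s$, because for $i\le s$ the module $\Ext^i_R(C,M)$ is an extension of a submodule of $\Ext^i_R(B,M)$ (finite since $i\le s$) by a quotient of $\Ext^{i-1}_R(A,M)$ (finite since $i-1\le s-1$).

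Next, by Gruson's theorem in the form: over a Noetherian ring, if $N$ and $L$ are finitely generated and $\Supp_R(L)\subseteq\Supp_R(N)$, then $L$ admits a finite filtration $0=L_0\subseteq L_1\subseteq\cdots\subseteq L_r=L$ in which each $L_j/L_{j-1}$ is a homomorphic image of a finite direct sum $N^{n_j}$ of copies of $N$ — repeated application of property (i) along this filtration reduces the problem to showing $Q\in\mathcal C_t$ for every homomorphic image $Q$ of a finite direct sum $N^{n}$. I would prove this by induction on $t$. Pick an exact sequence $0\to K\to N^{n}\to Q\to 0$; since $R$ is Noetherian, $K$ is finitely generated, and $\Supp_R(K)\subseteq\Supp_R(N^n)=\Supp_R(N)$. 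If $t=0$, then $K\in\mathcal C_{-1}$ trivially and $N^{n}\in\mathcal C_0$, so (ii) gives $Q\in\mathcal C_0$. If $t>0$, assume the lemma for $t-1$: from $N\in\mathcal C_t\subseteq\mathcal C_{t-1}$ and $\Supp_R(K)\subseteq\Supp_R(N)$ the inductive hypothesis (applied to $N$ and $K$) gives $K\in\mathcal C_{t-1}$; since also $N^{n}\in\mathcal C_t$, property (ii) yields $Q\in\mathcal C_t$. This closes the induction and finishes the proof.

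The only non-formal input is Gruson's theorem; everything else is bookkeeping with long exact sequences and the Serre property of finitely generated modules. The delicate point is the one-step degree drop in (ii): the kernel $K$ above is controlled only one homological degree lower than $Q$, which is precisely why the argument has to be organised as an induction on $t$, and why Gruson's filtration — whose subquotients are \emph{quotients} of direct sums of $N$ rather than submodules — is the right tool here, as opposed to, say, a prime filtration of $L$.
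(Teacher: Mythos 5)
Your proof is correct, and it is essentially the argument behind the paper's proof: the paper simply cites \cite[Proposition 1]{DM}, and that proposition is proved by exactly your route — Gruson's filtration of $L$ by homomorphic images of finite direct sums of copies of $N$, combined with the long exact sequence of $\Ext_R(-,M)$ and induction on $t$ to handle the kernel one degree lower. So you have in effect reconstructed the cited proof rather than found a different one; no gaps.
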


\begin{proof}
See \cite[Proposition 1]{DM}.
\end{proof}	


The following corollary is our main result of this section which is a characterization of  cofinite local cohomology and generalized local cohomology modules under the assumptions (a) $\ara (\fa)\leq 1$, (b) $\dim R/\fa \leq 1$ and (c) $\dim R\leq 2$. This corollary shows that with the above assumptions, cofiniteness of local cohomology and generalized local cohomology modules are equivalent.

\begin{cor}\label{min}
	Let $M$ be an $R$-module and suppose one of the following cases holds:
	\begin{itemize}
		\item[(a)] $\ara (\fa)\leq 1$;
		\item[(b)]  $\dim R/\fa \leq 1$;
		\item[(c)]  $\dim R\leq 2$.
	\end{itemize}
	Then the following conditions
	are equivalent: 
	\begin{itemize}
		\item[(i)] $\Ext^i_R(R/\fa ,M)$ is finite for all $i$.
		\item[(ii)] $\Ext^i_R(R/\fa ,M)$ is finite for $i=0,1$ in the cases {\rm(}a{\rm)} and {\rm(}b{\rm)} {\rm(}resp. for $i=0,1,2$  in the case {\rm(}c{\rm)}{\rm)};
		\item[(iii)] $\lc^{i}_\fa(M)$ is  $\fa$-cofinite  for all $i$;
		\item[(iv)]  $\lc^{i}_\fa(N,M)$ is  $\fa$-cofinite
		for all $i$ and for any finite $R$-module $N$;
		\item[(v)] $\Ext^{i}_R(N,M)$ is  finite  for all $i$ and for any finite $R$-module $N$ with $\Supp_R(N)\subseteq \V(\fa)$;
		\item[(vi)] $\Ext^{i}_R(N,M)$ is  finite  for all $i$ and for some finite $R$-module $N$ with $\Supp_R(N)=\V(\fa)$;
		\item[(vii)] $\Ext^{i}_R(N,M)$ is  finite  for $i=0,1$ in the cases {\rm(}a{\rm)} and {\rm(}b{\rm)} {\rm(}resp. for $i=0,1,2$  in the case {\rm(}c{\rm)}{\rm)} and for any finite $R$-module $N$ with $\Supp_R(N)\subseteq \V(\fa)$;
		\item[(viii)] $\Ext^{i}_R(N,M)$ is  finite  for $i=0,1$ in the cases {\rm(}a{\rm)} and {\rm(}b{\rm)} {\rm(}resp. for $i=0,1,2$  in the case {\rm(}c{\rm)}{\rm)} and for some finite $R$-module $N$ with $\Supp_R(N)=\V(\fa)$.
	\end{itemize}
\end{cor}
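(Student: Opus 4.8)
The plan is to take Theorems \ref{cof3} and \ref{cof4} as the backbone and attach the remaining conditions to the equivalence class of (i). Since (i)$\Leftrightarrow$(iii) is Theorem \ref{cof3} and (i)$\Leftrightarrow$(iv) is Theorem \ref{cof4}, it is enough to establish (i)$\Rightarrow$(ii) (trivial), (ii)$\Rightarrow$(iii), (i)$\Rightarrow$(v), (v)$\Rightarrow$(vi), (vi)$\Rightarrow$(i), (v)$\Rightarrow$(vii) (trivial), (vii)$\Rightarrow$(viii), and (viii)$\Rightarrow$(ii); these close the cycle. For (i)$\Rightarrow$(v) I would apply Theorem \ref{T:ET} with $\mathcal S$ the Serre subcategory of finitely generated $R$-modules: condition (i) says precisely that $M$ satisfies the equivalent conditions of Theorem \ref{T:E} for this $\mathcal S$, so $\Ext^i_R(N,M)$ is finite for every finite $N$ with $\Supp_R(N)\subseteq\V(\fa)$ and every $i$, which is (v).

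The implications (v)$\Rightarrow$(vi) and (vii)$\Rightarrow$(viii) are witnessed by $N=R/\fa$, since $\Supp_R(R/\fa)=\V(\fa)$; and (vi)$\Rightarrow$(i) and (viii)$\Rightarrow$(ii) follow from Lemma \ref{mar}, because $\Supp_R(R/\fa)=\V(\fa)=\Supp_R(N)$ lets one transfer finiteness of $\Ext^i_R(N,M)$ to finiteness of $\Ext^i_R(R/\fa,M)$ in the whole range of $i$, respectively in the range $i\le 1$ (resp. $i\le 2$). Thus only one substantive implication remains, namely (ii)$\Rightarrow$(iii); together with (iii)$\Rightarrow$(i) from Theorem \ref{cof3} it also yields (ii)$\Rightarrow$(i) and closes everything.

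This last implication, (ii)$\Rightarrow$(iii), is where the hypotheses (a)/(b)/(c) are essential: one must show that finiteness of only $\Ext^0_R(R/\fa,M)$ and $\Ext^1_R(R/\fa,M)$ (and also $\Ext^2_R(R/\fa,M)$ in case (c)) already forces every $\lc^i_\fa(M)$ to be $\fa$-cofinite. In case (c) this is \cite[Theorem 7.10]{Mel}. In case (b) I would follow the scheme of the proof of Theorem \ref{cof4}: using the exact sequence $0\to\G_\fa(M)\to M\to M/\G_\fa(M)\to 0$ reduce to an $\fa$-torsion-free module, dimension-shift through an injective $\fa$-torsion-free module, and induct on $i$; the extra input is that $\Supp_R(\lc^i_\fa(M))\subseteq\V(\fa)$ together with $\dim R/\fa\le 1$ forces $\dim\lc^i_\fa(M)\le 1$, so that $\fa$-cofiniteness of each $\lc^i_\fa(M)$ is detected by finiteness of its $\Hom_R(R/\fa,-)$ and $\Ext^1_R(R/\fa,-)$, which is the content of \cite[Theorem 2.10]{Mel1}. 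Case (a) runs the same way, with \cite[Theorem 3.3]{BA} in place of the dimension-one criterion and with the simplification that $\ara(\fa)\le 1$ kills $\lc^i_\fa(-)$ for $i\ge 2$, so that only $\G_\fa(M)$ and $\lc^1_\fa(M)$ need be controlled. I expect the real work to be exactly this: arranging the inductions in cases (a) and (b) so that only the two or three Ext modules listed in (ii) are needed; the reductions through $N=R/\fa$, Lemma \ref{mar} and Theorem \ref{T:ET} are formal.
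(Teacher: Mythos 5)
Most of your architecture matches the paper's: (i)$\Leftrightarrow$(iii) and (i)$\Leftrightarrow$(iv) are Theorems \ref{cof3} and \ref{cof4}, (v)$\Rightarrow$(vi) and (vii)$\Rightarrow$(viii) are trivial specializations, and (vi), (viii) are brought back via Lemma \ref{mar}, exactly as in the paper. Your one genuine variation on the ``formal'' part is to get (i)$\Rightarrow$(v) from Theorem \ref{T:ET} with $\mS$ the Serre subcategory of finite modules, whereas the paper invokes Kawasaki's lemma \cite[Lemma 1]{Ka} for (i)$\Leftrightarrow$(v) and (ii)$\Leftrightarrow$(vii); your route is correct, is more self-contained (it reuses Section 2), and by channelling (vii), (viii) through (v) you avoid needing the degreewise version (ii)$\Leftrightarrow$(vii) altogether.

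The gap is precisely in the step you single out as ``the real work,'' namely (ii)$\Rightarrow$(iii) in cases (a) and (b). The paper does not re-prove this: it obtains (i)$\Leftrightarrow$(ii) by quoting \cite[Theorem 3.3]{BA}, \cite[Theorem 2.3]{Mel1} and \cite[Theorem 7.10]{Mel}, which contain exactly the statement that finiteness of $\Ext^i_R(R/\fa,M)$ for $i\le 1$ (resp.\ $i\le 2$) propagates to all $i$ under (a), (b) (resp.\ (c)). Your substitute---rerun the $\G_\fa$/dimension-shifting induction of Theorem \ref{cof4} and certify cofiniteness of each $\lc^i_\fa(M)$ by the criterion that, for torsion modules of dimension at most one, finiteness of $\Hom_R(R/\fa,-)$ and $\Ext^1_R(R/\fa,-)$ suffices---does not close as sketched. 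From $0\to \G_\fa(M)\to M\to \overline M\to 0$ one does get $\G_\fa(M)$ cofinite and $\Ext^i_R(R/\fa,\overline M)$ finite for $i=0,1$; but after embedding $0\to \overline M\to E\to L\to 0$ with $E$ injective and $\fa$-torsion free, one has $\Hom_R(R/\fa,L)\cong\Ext^1_R(R/\fa,\overline M)$ while $\Ext^1_R(R/\fa,L)\cong\Ext^2_R(R/\fa,\overline M)$, and already the criterion applied to $\lc^1_\fa(M)\cong\G_\fa(L)$ (or the next inductive step) demands finiteness of this $\Ext^2$, i.e.\ of $\Ext^2_R(R/\fa,M)$---which is exactly what (ii) withholds in cases (a) and (b). The same obstruction hits case (a), where controlling $\lc^1_\fa(M)$ requires a different idea (e.g.\ that when $\sqrt{\fa}=\sqrt{(x)}$ the top module $\lc^1_\fa(M)$ is $x$-divisible, so its cofiniteness reduces to finiteness of $0:_{\lc^1_\fa(M)}x$, as in \cite{Mel} and \cite{BA}). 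So either supply that extra argument or simply quote the cited theorems for (i)$\Leftrightarrow$(ii), as the paper does; as written, your induction step fails at the first shift.
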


\begin{proof}
	In order to prove (i)$\Leftrightarrow$(ii),  use \cite[Theorem 3.3]{BA}, \cite[Theorem 2.3]{Mel1} and \cite[Theorem 7.10]{Mel}, respectively in the cases (a), (b) and (c).
	
	(i)$\Leftrightarrow$(iii) follows by Theorem \ref{cof3}.
	
	(i)$\Leftrightarrow$(iv) follows by Theorem \ref{cof4}.
	
	In order to prove (i)$\Leftrightarrow$(v) and (ii)$\Leftrightarrow$(vii) use  \cite[Lemma 1]{Ka}.
	
	(v)$\Rightarrow$(vi) and (vii)$\Rightarrow$(viii) are trivial.
	
	In order to prove (vi)$\Rightarrow$(v) and (viii)$\Rightarrow$(vii), let $L$ be a finitely generated $R$-module with $\Supp_R(L)\subseteq \V(\fa)$. Then $\Supp_R(L)\subseteq \Supp_R(N)$. Now the assertion follows by Lemma \ref{mar}.
\end{proof}

It is easy to see that $\cd(\fa,R)\leq \ara \fa$ but the equality is not true in general (see \cite[Example 2.3]{DFT}). We close this section by offering a question and problem for further research.\\

\noindent {\bf Question:} Let $R$ be a commutative Noetherian ring with non-zero identity and
$\fa$ an ideal of $R$. Is the charaterization  in Corollary \ref{min} true, when we change $\ara \fa\leq 1$ with $\cd(\fa,R)\leq 1$ in the case (a)?





\section{Artinianness of local cohomology modules} 


	

	


In general it is not true that when $M$ is a finite $R$-module over the Noetherian ring $R$, such that the local cohomology modules $\lc^{i}_\fa(M)$ have support in $\Max R$ for all $i>r$, then they are Artinian $R$-module for all $i>r$. For this see the example by Hartshorne in \cite[page 151]{Har}. However we prove that this is true in the case when $\dim R/{\fa}=1$. To do this we need the following lemma.


\begin{lem}\label{sez3.11}
	Let $\fa$ be an ideals of a Noetherian ring $R$, such that $\dim R/{\fa}=1$. Let $M$ be a finite $R$-module and $r$ an integer. Then $\dim M_{\fp}\leq r$  for each prime ideal  $\fp$ minimal over $\fa$  if and only if $\Supp_R(\lc^{i}_\fa(M))\subseteq \Max(R)$ for all $i>r$.
\end{lem}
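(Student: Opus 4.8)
The plan is to prove Lemma~\ref{sez3.11} by the standard reduction to the case where $M$ has no $\fa$-torsion, then use a local-to-global spectral sequence / direct calculation to pin down the support of the local cohomology modules in degrees $>r$. First I would record the key geometric input coming from $\dim R/\fa = 1$: the variety $\V(\fa)$ has dimension at most $1$, so for any prime $\fp \in \V(\fa)$, either $\fp$ is minimal over $\fa$, or $\fp$ is maximal (and then $\fp$ is minimal over $\fa + \fq$ for some minimal prime $\fq$ of $\fa$ with $\fp \supsetneq \fq$). Thus the only non-maximal primes in $\Supp_R(\lc^i_\fa(M))$ that can ever occur are those in $\Min(\fa)$, since $\Supp_R(\lc^i_\fa(M)) \subseteq \V(\fa)$ always. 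Consequently, the condition $\Supp_R(\lc^i_\fa(M)) \subseteq \Max(R)$ is equivalent to saying $\fp \notin \Supp_R(\lc^i_\fa(M))$ for every $\fp$ minimal over $\fa$, i.e. $(\lc^i_\fa(M))_\fp = 0$ for all such $\fp$.

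Next I would localize. For $\fp$ minimal over $\fa$, we have $(\lc^i_\fa(M))_\fp \cong \lc^i_{\fa R_\fp}(M_\fp)$. Since $\fp$ is minimal over $\fa$, the ideal $\fa R_\fp$ is $\fp R_\fp$-primary, so $\sqrt{\fa R_\fp} = \fp R_\fp = \fm_{R_\fp}$, and therefore $\lc^i_{\fa R_\fp}(M_\fp) = \lc^i_{\fm_{R_\fp}}(M_\fp)$. By Grothendieck's vanishing and non-vanishing theorem, $\lc^i_{\fm_{R_\fp}}(M_\fp) = 0$ for $i > \dim M_\fp$ and $\lc^{\dim M_\fp}_{\fm_{R_\fp}}(M_\fp) \neq 0$ when $M_\fp \neq 0$. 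Hence the vanishing of $(\lc^i_\fa(M))_\fp$ for all $i > r$ holds if and only if $\dim M_\fp \le r$. Combining this across all $\fp$ minimal over $\fa$ (a finite set, since $R$ is Noetherian) gives exactly the asserted equivalence: $\Supp_R(\lc^i_\fa(M)) \subseteq \Max(R)$ for all $i > r$ $\iff$ $(\lc^i_\fa(M))_\fp = 0$ for all $i>r$ and all $\fp \in \Min(\fa)$ $\iff$ $\dim M_\fp \le r$ for all $\fp \in \Min(\fa)$.

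The step that requires the most care is the first one, namely the claim that controlling the stalks at the minimal primes of $\fa$ suffices to control the whole support. For the "only if" direction this is immediate. For the "if" direction one must argue that if $(\lc^i_\fa(M))_\fp = 0$ for all minimal $\fp$ over $\fa$, then the support of $\lc^i_\fa(M)$, which is contained in $\V(\fa)$ and is closed under specialization (being the support of a module), consists only of maximal ideals: indeed any non-maximal prime of $\V(\fa)$ has dimension $\ge 1$ over $\fa$ hence, because $\dim R/\fa = 1$, must itself be a minimal prime of $\fa$; since the stalk there vanishes, such a prime cannot lie in the support. I would present this support-closedness argument explicitly, as it is where the hypothesis $\dim R/\fa = 1$ is genuinely used and is the only slightly subtle point; everything else is a routine application of localization of local cohomology and Grothendieck's vanishing/non-vanishing.
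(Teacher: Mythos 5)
Your argument is correct and is essentially the paper's own proof: the paper settles the lemma in one line by invoking the isomorphism $\lc^i_\fa(M)_\fp\cong\lc^i_{\fa R_\fp}(M_\fp)$ together with Grothendieck's vanishing theorem, and you merely make explicit the ingredients it leaves implicit, namely Grothendieck's non-vanishing theorem for the converse and the fact that, since $\dim R/\fa=1$, every non-maximal prime of $\V(\fa)$ is minimal over $\fa$. The one caveat (shared with the statement as printed, and harmless for its use in Theorem \ref{sez3.12}) is that a minimal prime of $\fa$ which is itself maximal may still lie in $\Supp_R(\lc^i_\fa(M))$, so the passage from $\Supp_R(\lc^i_\fa(M))\subseteq\Max(R)$ back to $\dim M_\fp\le r$ really only holds at the non-maximal minimal primes of $\fa$.
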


\begin{proof}
This is the direct consequence of Grothendieck's vanishing theorem and the isomorphism
$\lc^{i}_\fa(M)_{\fp}\cong \lc^{i}_{{\fa}R_{\fp}}(M_{\fp})$ for each prime ideal $\fp$ minimal over $\fa$.	
\end{proof}


\begin{thm}\label{sez3.12}
	Let $\fa$ be an ideals of a Noetherian ring $R$, with $\dim R/{\fa}=1$. For an $R$-module $M$   and  integer  an  $r$ the following conditions are equivalent; 

	\begin{itemize}
		\item[(i)]$\Supp_R(\lc^{i}_\fa(M))\subseteq \Max(R)$ for each $i>r$.
		\item[(ii)] $\lc^{i}_\fa(M)$ is an Artinian $R$-module for each $i>r$.
		\item[(iii)] $\lc^{i}_\fa(M)$ is an Artinian $\fa$-cofinte $R$-module for each $i>r$.
	\end{itemize} 
\end{thm}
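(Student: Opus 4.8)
The easy implications first: (iii)$\Rightarrow$(ii) is trivial, and (ii)$\Rightarrow$(i) holds because any Artinian $R$-module has support contained in $\Max(R)$ (each of its cyclic submodules is finitely generated and Artinian, hence of finite length, and $\Supp_R$ of the module is the union of these). So the whole content lies in (i)$\Rightarrow$(iii). I would split this into two tasks: (A) show that $\lc^i_\fa(M)$ is $\fa$-cofinite for each $i>r$; and (B) deduce from (A) together with (i) that each such $\lc^i_\fa(M)$ is Artinian. Task (B) is soft: $\fa$-cofiniteness makes $\Hom_R(R/\fa,\lc^i_\fa(M))$ finitely generated, and (i) forces its support into $\Max(R)$, so it is of finite length; as $\lc^i_\fa(M)$ is $\fa$-torsion, Melkersson's criterion (an $\fa$-torsion module whose $\fa$-socle is of finite length is Artinian) then shows $\lc^i_\fa(M)$ is Artinian.

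For task (A) I would start with the usual reduction to $\Gamma_\fa(M)=0$: passing from $M$ to $M/\Gamma_\fa(M)$ alters neither $\lc^i_\fa(M)$ for $i\ge 1$ nor the truth of (i) or (iii) in positive degrees, so -- working with $r\ge 0$, so that the degrees $i>r$ in question are all $\ge 1$ -- we may assume $M$ is $\fa$-torsion-free. Embedding $M$ in its injective envelope $E$, which is then also $\fa$-torsion-free, we get $\lc^i_\fa(E)=0$ for all $i$; hence the long exact sequence attached to $0\to M\to E\to N\to 0$ gives $\lc^i_\fa(N)\cong\lc^{i+1}_\fa(M)$ for all $i\ge 0$. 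Thus $\cd(\fa,N)=\cd(\fa,M)-1$, and $N$ again satisfies (i) but with $r$ replaced by $r-1$. Running an induction on $\cd(\fa,M)$ -- with trivial base case $\cd(\fa,M)\le 0$ once $r\ge 0$ -- this reduces task (A) to the single case $r=0$.

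Everything thus comes down to the case $r=0$: with $\Gamma_\fa(M)=0$ and $\Supp_R(\lc^i_\fa(M))\subseteq\Max(R)$ for all $i\ge 1$, prove that each $\lc^i_\fa(M)$ is $\fa$-cofinite. This is exactly the step where $\dim R/\fa=1$ is indispensable; already the implication (i)$\Rightarrow$(ii), hence a fortiori (i)$\Rightarrow$(iii), fails for general $\fa$ (Hartshorne's example, \cite[p.~151]{Har}). My plan here is to localize at the finitely many minimal primes $\fp$ of $\fa$ that are not maximal: for such $\fp$ one has $\dim R/\fp=1$ and $\sqrt{\fa R_\fp}=\fp R_\fp$, so $\lc^i_\fa(M)_\fp\cong\lc^i_{\fp R_\fp}(M_\fp)$, which vanishes for $i\ge 1$ by hypothesis, while $\Gamma_{\fp R_\fp}(M_\fp)=\Gamma_\fa(M)_\fp=0$; exactly as in the proof of Lemma~\ref{sez3.11} (via Grothendieck's vanishing theorem), $M_\fp$ is then acyclic for $\Gamma_{\fp R_\fp}$. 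This is the input one feeds into the cofiniteness theory for ideals of dimension one -- the arbitrary-module strengthening of Theorem~\ref{1.1}(b) lying behind Theorem~\ref{cof3} and Corollary~\ref{min} -- to conclude that $\Hom_R(R/\fa,\lc^i_\fa(M))$ (and $\Ext^1_R(R/\fa,\lc^i_\fa(M))$) are finitely generated, hence that $\lc^i_\fa(M)$ is $\fa$-cofinite. The main obstacle is precisely this last passage, from the localized vanishing to global finite generation: it is the technical core of the dimension-one theory and the one place where the hypothesis $\dim R/\fa=1$ genuinely cannot be dispensed with.
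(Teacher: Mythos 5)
Your easy implications and your task (B) are fine (cofinite plus support in $\Max(R)$ gives a finite--length $\fa$-socle, and Melkersson's criterion for $\fa$-torsion modules then gives Artinianness), but task (A) has a fatal gap exactly at the place you flag as ``the main obstacle'': you never prove the $r=0$ statement, and in the generality in which you have set things up it is \emph{false}. Your dimension-shifting (replacing $M$ by $M/\G_\fa(M)$ and then by the cokernel of the embedding into an injective hull) leaves the class of finite modules at the very first step, and for a non-finite module the implication ``$\G_\fa(M)=0$ and $\Supp_R(\lc^i_\fa(M))\subseteq\Max(R)$ for all $i\ge 1$ imply $\lc^i_\fa(M)$ is $\fa$-cofinite'' fails: take $R=k[[x,y]]$, $\fa=(x)$ (so $\dim R/\fa=1$) and $M=\bigoplus_{n\ge 1}R/(y^n)$. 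Here $x$ is a nonzerodivisor on each $R/(y^n)$, so $\G_\fa(M)=0$; since $\sqrt{(x,y^n)}=\fm$ and local cohomology commutes with direct sums, $\lc^1_\fa(M)\cong\bigoplus_{n\ge 1}\lc^1_{\fm}(R/(y^n))$ is an infinite direct sum of nonzero Artinian $\fm$-torsion modules, hence supported exactly at $\{\fm\}\subseteq\Max(R)$, yet it is neither Artinian nor $\fa$-cofinite (its $\fa$-socle $\bigoplus_{n}(0:_{\lc^1_\fm(R/(y^n))}x)$ is an infinite direct sum of nonzero modules), while $\lc^i_\fa(M)=0$ for $i\ge 2$. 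So no appeal to ``the cofiniteness theory for ideals of dimension one'' can close your gap: Theorem \ref{cof3}(b) for arbitrary modules requires finiteness of $\Ext^i_R(R/\fa,M)$ for all $i$, which you do not have for the shifted modules, and the unconditional statement you would need is refuted by the example. The same example shows the theorem must be read, as the paper's own proof and the surrounding discussion indicate, for a \emph{finite} module $M$ (and also why your silent restriction to $r\ge 0$ is only harmless when $M$ is finite).

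The paper's proof stays inside finite modules and this is precisely what makes it work: it argues by induction on $r$, reducing to $N=M/\G_\fa(M)$, using prime avoidance (legitimate because $N$ is finite and $\G_\fa(N)=0$) to choose $x\in\fa$ regular on $N$, applying Lemma \ref{sez3.11} to see that $N/xN$ satisfies the hypothesis with $r-1$, and then using the long exact sequence together with the Melkersson-type criteria (an $\fa$-torsion module with $(0:x)$ Artinian, respectively $\fa$-cofinite, is itself Artinian, respectively $\fa$-cofinite); the base case rests on the known cofiniteness of $\lc^i_\fa$ of a finite module when $\dim R/\fa\le 1$. Note also that once $M$ is finite your elaborate reduction is unnecessary: task (A) is exactly Theorem \ref{1.1}(b) (Bahmanpour--Naghipour \cite{BN}), and then your task (B) finishes the argument; alternatively, if you insist on the injective-hull shift you must carry along the finiteness of $\Ext^i_R(R/\fa,-)$ through the isomorphisms $\Ext^i_R(R/\fa,E/M)\cong\Ext^{i+1}_R(R/\fa,M)$ and invoke Theorem \ref{cof3}(b), at which point the support hypothesis plays no role in (A) and the reduction is superfluous. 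Either way, the missing ingredient in your write-up is an actual proof of cofiniteness, and it can only be supplied after restricting to finite $M$ (or to modules with all $\Ext^i_R(R/\fa,M)$ finite), not for the arbitrary modules your reduction produces.
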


\begin{proof} The  implications (iii)$\Rightarrow$(ii)$\Rightarrow$(i) always hold, so let us assume (i) and prove that (iii) hold. We do induction on $r$. First we note that if $N$ is a finite $R$-module, such that $\Supp_R(\lc^{i}_\fa(N))\subseteq \Max(R)$, then $\lc^{i}_\fa(N)$ is $\fa$-cofinite Artinian for all $i$. Let $r=0$ and put $N=M/\G_{\fa}(M)$. Then $\lc^{0}_\fa(N)=0$ and  $\lc^{i}_\fa(N)\cong \lc^{i}_\fa(M)$ for all $i>0$.  Hence by the above remark $\lc^{i}_\fa(N)$ is $\fa$-cofinite Artinian for all $i$. It follows that $\lc^{i}_\fa(M)$ is Artinian and $\fa$-cofinite for all $i>0$. 
	
Next let $r>0$	and put $N=M/\G_{\fa}(M)$. Then $\lc^{0}_\fa(N)=0$ and we can take $x\in \fa$ such that $x$ is $N$-regular.

Then by Lemma \ref{sez3.11}, $\dim(N/xN)_{\fp}\leq r-1$ for each prime ideal $\fp$ minimal over $\fa$. Applying the induction hypothesis to $N/xN$, we have that $\lc^{i}_\fa(N/xN)$ is $\fa$-cofinite Artinian for all $i>r-1$. Next applying local cohomology functor to the exact sequence $0\lo N\lo N \lo N/xN
	\lo 0,$ we can conclude that $(0:_{\lc^{i}_\fa(N)}x)$ is $\fa$-cofinite Artinian for all $i>r$. It follows that $\lc^{i}_\fa(N)$ is Artinian and $\fa$-cofinite for all $i>r$. Therefore also $\lc^{i}_\fa(M)$ , for all $i>r$.

	
	
	
\end{proof}





\bibliographystyle{amsplain}

\begin{thebibliography}{9}





\bibitem{AB}
M. Aghapournahr and K. Bahmanpour, {\it Cofiniteness of weakly Laskerian local
cohomology modules}, Bull. Math. Soc. Sci. Math. Roumanie, \textbf{105}
(2014), 347-356.







\bibitem{B1}
K. Bahmanpour, {\it Cohomological dimension, cofiniteness and Abelian categories of cofinite modules},
J. Algebra, {\bf 484} (2017), 168--197.


\bibitem{B}
K. Bahmanpour, {\it On the category of weakly Laskerian cofinite modules},
Math. Scand., {\bf 115} (2014), 62--68.

\bibitem{BA}
K. Bahmanpour and M. Aghapournahr, {\it A note on cofinite modules}, Comm. Algebra, {\bf44} (2016), 3683-3691.

\bibitem{BN}
K. Bahmanpour and  R. Naghipour, {\it Cofiniteness of local cohomology
modules for ideals of small dimension},  J. Algebra, \textbf{321} (2009),
1997--2011.











\bibitem{BSh}  M. P. Brodmann and R. Y. Sharp, Local cohomology: An algebraic introduction with geometric
applications, Cambridge. Univ. Press, 1998.


\bibitem{BH} W. Bruns and J. Herzog,  Cohen Macaulay Rings,
Cambridge Studies in Advanced Mathematics, Vol. 39, Cambridge Univ. Press,
Cambridge, UK, 1993.




\bibitem{DM}
D. Delfino and T. Marley, {\it Cofinite modules and local cohomology}, J. Pure
Appl. Algebra, {\bf 121} (1997), 45--52.







\bibitem{DFT}
K. Divaani-Aazar, H. Faridian and M. Tousi,  {\it A new outlook on cofiniteness},  Kyoto J. Math. to appear.



	\bibitem{DH}
	K. Divaani-Aazar, A. Hajkarimi,
	\emph{Cofiniteness of generalized
		local cohomology modules for one-dimensional ideals}, Canad. Math. Bull. \textbf{55}(2012), 81--87.

\bibitem{DiM}
K. Divaani-Aazar and A. Mafi,  {\it Associated primes of local cohomology
	modules}, Proc. Amer. Math. Soc., {\bf133} (2005), 655-660.





\bibitem{Gro}
  A. Grothendieck, \emph{Cohomologie locale des faisceaux
  coh\'{e}rents et th\'{e}or\`{e}mes de Lefschetz
  locaux et globaux {\rm(}SGA2{\rm)}},
  North-Holland, Amsterdam, 1968.
  

  

\bibitem{Har}
R. Hartshorne, {\it Affine duality and cofiniteness}, Invent. Math., {\bf9} (1970),
145-164.

\bibitem{Hu}
. Huneke,
{\it Problems on local cohomology, free resolutions in commutative
	algebra and algebraic geometry}
\rm{(}Sundance, Utah,1990\rm{)}, Research Notes in Mathematics 2, Boston, Ma, Jones and
Bartlett Publisher,(1994), 93--108


 \bibitem{HK}
   C. Huneke and J. Koh,
   \emph{Cofiniteness and vanishing of local cohomology
   modules}, Math. Proc. Cambridge Philos. Soc., \textbf{110} (1991),
   421--429.

\bibitem{He}
J. Herzog, \emph{Komplexe, Aufl\"{o}sungen und Dualit\"{a}t in der lokalen Algebra},
Habilitationsschrift, Universitat Regensburg 1970.


\bibitem{hung} P. Hung Quy,  {\it On the finiteness of associated primes of local
cohomology modules}, Proc. Amer. Math. Soc., {\bf 138} (2010), 1965-1968.





\bibitem{Ka2}
K.-I. Kawasaki, {\it On a category of cofinite modules which is Abelian}, Math. Z.,
{\bf 269} (2011), 587-608.


\bibitem{Ka}
K.-I. Kawasaki, \emph{On the finiteness of Bass numbers of local cohomology
	modules}, Proc. Amer. Math. Soc., {\bf 124} (1996), 3275--3279.










\bibitem{MS1}
	A. Mafi, H. Saremi,
	\emph{Cofinite modules and generalized
		local cohomology modules }, Houston J. Math. \textbf{35}(2009), 1013--1018.

\bibitem{Mat}
H. Matsumura,  Commutative ring theory, Cambridge Univ. Press, Cambridge,
UK, 1986.

\bibitem{Mel1} L. Melkersson,  {\it Cofiniteness with respect to ideals of dimension one}, J.
Algebra, {\bf372}  (2012), 459-462.

\bibitem{Mel} L. Melkersson, {\it Modules cofinite with respect to an ideal}, J.
Algebra, {\bf 285} (2005), 649--668.


\bibitem{VA}
A. Vahidi and M. Aghapournahr, {\it Some results on generalized   local cohomology modules}, Comm. Algebra,
\textbf{43} (2015), 2214-2230.









\bibitem{Ya}
	S. Yassemi, \emph{Cofinite modules}, Comm. Algebra,
	\textbf{6} (2001), 2333-2340.













%

\end{thebibliography}

\end{document}